\documentclass[11pt]{jgcc}
\usepackage{amssymb}
\usepackage{amsrefs}
\usepackage{mathrsfs}
\usepackage{faktor}
\usepackage{tikz}
\usepackage{thmtools}
\usepackage{thm-restate}
\usetikzlibrary{arrows}
\usetikzlibrary{shapes.geometric}
\usetikzlibrary{decorations.pathreplacing, decorations.markings}

\usepackage{hyperref}
\usepackage{graphicx}

\DeclareMathOperator{\shortmod}{mod}
\DeclareMathOperator{\expsum}{expsum}
\DeclareMathOperator{\Aut}{Aut}

\pdfoutput=1
\usepackage{lastpage}
\jgccdoi{14}{1}{2}{9776}  
\jgccheading{}{\pageref{LastPage}}{}{}%
{July~8,~2022}{Oct.~3,~2022}{}    

\begin{document}

\title{Equations in virtually class 2 nilpotent groups}

\author{Alex Levine}

\address{Department of Mathematics, Alan Turing Building, The University
of Manchester, Manchester M13 9PL, UK}

\email{alex.levine@manchester.ac.uk}

\keywords{equations in groups, nilpotent groups, decidability}

\subjclass[2020]{20F10, 20F18, 03B25}

\begin{abstract}
  We give an algorithm that decides whether a single equation in a group that is
  virtually a class 2 nilpotent group with a virtually cyclic commutator
  subgroup, such as the Heisenberg group, admits a solution. This generalises
  the work of Duchin, Liang and Shapiro to finite extensions.
\end{abstract}

\maketitle

\section{Introduction}
  Since the 1960s, many papers have discussed algorithms to decide whether or
  not equations in a variety of different classes of groups admit solutions. An
  \textit{equation} with the variable set \(V\) in a group \(G\) has the form,
  \(w = 1\), for some element \(w \in G \ast F(V)\). A first major positive
  result in this area is due to Makanin, during the 1980s, when in a series of
  papers he proved that it is decidable whether a finite system of equations in
  a free group admits a solution \cites{Makanin_systems, Makanin_semigroups,
  Makanin_eqns_free_group}. Since then, Makanin's work has been extended to show
  the decidability of the satisfiability equations in hyperbolic groups,
  solvable Baumslag-Solitar groups, right-angled Artin groups and more
  \cites{dahmani_guirardel, eqns_RAAGs, diophantine_metabelian_grps}.

  Our primary focus in this paper will be single equations in virtually finitely
  generated class \(2\) nilpotent groups, where if \(\mathcal{P}\) is a property
  of groups, we say a group is \textit{virtually} \(\mathcal{P}\) if it has a
  finite-index subgroup with \(\mathcal{P}\). The \textit{single equation
  problem} in a group \(G\) is the decision question as to whether there is an
  algorithm for \(G\) that takes as input an equation \(w = 1\) in \(G\) and
  outputs whether or not \(w = 1\) admits a solution. Duchin, Liang and Shapiro
  proved that the single equation problem in finitely generated class \(2\)
  nilpotent groups with a virtually cyclic commutator subgroup is decidable
  \cite{duchin_liang_shapiro}. This is in contrast with the fact that the
  satisfiability of systems of equations in free nilpotent groups of class \(2\)
  is undecidable. The assumption that the commutator subgroup is virtually
  cyclic cannot be completely removed; Roman'kov gave an example of a finitely
  generated class \(2\) nilpotent group where it is undecidable whether
  equations of the form \([X_1, \ X_2] = g\), where \(g\) is a constant, admit
  solutions \cite{Romankov_commutators}.

  Our main result is to generalise Duchin, Liang and Shapiro's result to show
  that the single equation problem in virtually a finitely generated class \(2\)
  nilpotent group with a virtually cyclic commutator subgroup is decidable. This
  class includes the Heisenberg group and all higher Heisenberg groups.

  \begin{restatable}{thm}{mainthm}
    \label{virt_class_2_decidable_thm}
     The single equation problem in virtually a group that is class \(2\)
     nilpotent group with a virtually cyclic commutator subgroup is decidable.
  \end{restatable}

  Roman'kov began the study of equations in nilpotent groups, when in 1977 he
  showed that in free nilpotent groups of class at least \(9\) and sufficiently
  large rank, it is undecidable whether finite systems of equations admit a
  solution \cite{romankov_undecidable_first}. Following this, Repin proved that
  there is a finitely presented nilpotent group such that the satisfiability of
  single equations with one variable are undecidable \cite{repin83}, and
  improved this by showing that there are such groups of nilpotency class \(3\)
  \cite{repin}. These results constrast with the fact that the conjugacy
  problem, which is a specific example of a one-variable equation, is decidable
  in all finitely generated nilpotent groups. Repin also showed that the
  satisfiability of single equations with one variable in non-abelian free
  nilpotent groups of class at least \(10^{20}\) is undecidable \cite{repin}.

  In the positive direction, Repin proved that the satisfiability of single
  equations with one variable are decidable in any finitely generated class
  \(2\) nilpotent group \cite{repin}. In addition, Truss showed that the
  satisfiability of single equations in two variables in the free nilpotent
  group of class \(2\) and rank \(2\) (the Heisenberg group) are decidable
  \cite{truss}.


  We prove our main result using a similar method to the method used by Duchin,
  Liang and Shapiro \cite{duchin_liang_shapiro}; by converting an equation in a
  virtually class \(2\) nilpotent group with a virtually cyclic commutator into
  an equivalent system of linear and quadratic equations and congruences in the
  ring of integers. We then show that the system obtained is of the same type as
  that obtained from an equation in a class \(2\) nilpotent group with a
  virtually cyclic commutator subgroup, and is thus decidable using the work of
  Duchin, Liang and Shapiro.

  In Section \ref{prelims_sec}, we define a group equation and solution, and
  give some background on nilpotent groups. In Section
  \ref{transforming_over_Z_sec}, we use the arguments of Duchin, Liang and
  Shapiro \cite{duchin_liang_shapiro} to detail the reduction from a single
  equation in a class \(2\) nilpotent group to a system of equations in the ring
  of integers. We conclude in Section \ref{finite_ext_sec} by using this
  reduction to prove Theorem \ref{virt_class_2_decidable_thm}.

\section{Preliminaries}
  \label{prelims_sec}

  \begin{nota}
    We introduce notation we will frequently use.
    \begin{enumerate}
      \item If \(S\) is a subset of a group, we define \(S^\pm = S \cup
      S^{-1}\). Moreover, if \(a \in S\) and \(w \) is a word over \(S^\pm\)
      we define \(\expsum_a{w}\) to be the number of occurences of \(a\) in
      \(w\) minus the number of occurences of \(a^{-1}\) in \(w\);
      \item For elements \(g\) and \(h\) of a group \(G\), the commutator is
      defined by \([g, \ h] = g^{-1}h^{-1}gh\);
      \item If \(x \in \mathbb{R}\), we will define the floor notation \(\lfloor
      x \rfloor\) in a non-standard way:
      \[
        \lfloor x \rfloor = \left\{
        \begin{array}{cl}
          \max\{y \in \mathbb{Z} \mid y \leq x\} & x \geq 0 \\
          \min\{y \in \mathbb{Z} \mid y \geq x\} & x < 0.
        \end{array}
        \right.
      \]
      That is, we round towards zero.
    \end{enumerate}
  \end{nota}

  \subsection*{Group equations}
  We start with the definition and some examples of equations in groups.

  \begin{defi}
    Let \(G\) be a finitely generated group, \(V\) be a finite set disjoint with
    \(G\), and \(F(V)\) be the free group on \(V\). An \textit{equation} in
    \(G\) is an identity \(w = 1\), where \(w \in G \ast F(V)\). A
    \textit{solution} to \(w = 1\) is a homomorphism \(\phi \colon G \ast F(V)
    \to G\) that fixes elements of \(G\), such that \(\phi(w) = 1\). The
    elements of \(V\) are called the \textit{variables} of the equation. A
    \textit{system of equations} in \(G\) is a finite set of equations in \(G\),
    and a \textit{solution} to a system is a homomorphism that is a solution to
    every equation in the system.

    We say two systems of equations in \(G\) are \textit{equivalent} if they
    have the same set of solutions.

    The \textit{single equation problem} in \(G\) is the decidability question
    as to whether there is an algorithm that accepts as input an equation \(w =
    1\) in \(G\), where the elements of \(G\) within \(w = 1\) are represented
    by words over a finite generating set, and returns \textsc{yes} if \(w = 1\)
    admits a solution and \textsc{no} otherwise.
  \end{defi}

  \begin{rem}
    We will often write a solution to an equation in a group \(G\) as a tuple of
    elements \((g_1, \ \ldots, \ g_n)\), rather than a homomorphism. We can
    recover such a homomorphism \(\phi\) from a tuple by setting \(\phi(g) = g\)
    for each \(g \in G\) and \(\phi(X_i) = g_i\) for each \(X_i\), and the
    action of \(\phi\) on the remaining elements is now determined as it is a
    homomorphism.
  \end{rem}

  \begin{exa}
    Deciding whether an equation in the group \(\mathbb{Z}\) admits a solution
    reduces to solving a linear equation in integers. For example, using the
    free generator \(a\) for \(\mathbb{Z}\),
    \[
      X^2 a^3 Y^2 a^{-3} Y^{-1} a = 1
    \]
    is an equation, which we can rewrite using additive notation as
    \[
      2X + 3 + 2Y - 3 - Y + 1 = 0.
    \]
    We can use the fact that \(\mathbb{Z}\) is abelian to show that this is
    equivalent to \(2X + Y + 1 = 0\), which is just a linear equation in
    integers, and elementary linear algebra can be used to decide if it admits a
    solution (and `construct' the set of solutions). In this case, the equation
    does admit solutions, and the set of solutions is
    \[
      \{(x, \ -2x - 1) \mid x \in \mathbb{Z}\}.
    \]
  \end{exa}

  \subsection*{Nilpotent groups}

  Below we give the definition of a nilpotent group, along with an elementary
  lemma about commutators we will use later on.

  \begin{defi}
    Let \(G\) be a group. Define \(\gamma_i(G)\) for all \(i \in
    \mathbb{Z}_{\geq 0}\) inductively as follows.
    \begin{align*}
      & \gamma_0(G) = G \\
      & \gamma_i(G) = [G, \ \gamma_{i - 1}(G)] \text{ for } i \geq 1.
    \end{align*}
    The subnormal series \((\gamma_i(G))\) is called the \textit{lower central
    series} of \(G\). We call \(G\) \textit{nilpotent} of \textit{class} \(c\)
    if \(\gamma_c(G)\) is trivial.
  \end{defi}

  \begin{lem}[\cite{EDT0L_Heisenberg}, Lemma 2.3]
    \label{commutator_lem}
    Let \(G\) be a class \(2\) nilpotent group, and \(g, \ h \in G\). Then
    \begin{enumerate}
      \item[1.] \([g^{-1}, \ h^{-1}] = [g, \ h]\),
      \item[2.] \([g^{-1}, \ h] = [g, \ h]^{-1}\).
    \end{enumerate}
  \end{lem}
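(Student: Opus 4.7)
The plan is to exploit the fact that in a class $2$ nilpotent group $G$, every commutator is central: since $\gamma_2(G) = [G, [G,G]]$ is trivial, every element of $[G,G]$ commutes with every element of $G$. Once this is in hand, both parts of the lemma fall out of bilinearity of the commutator bracket.

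From this centrality I would first establish bilinearity: $[xy, z] = [x,z][y,z]$ and $[x, yz] = [x,y][x,z]$ for all $x, y, z \in G$. For the first, expand
\[
  [xy, z] = y^{-1}x^{-1}z^{-1}xyz = y^{-1}\,[x,z]\,(z^{-1}yz),
\]
and then use centrality of $[x,z]$ to commute it past $y^{-1}$, which yields $[x,z][y,z]$. The identity in the second slot is analogous, noting that commutators commute with one another since they are all central, so the order of the resulting product is irrelevant.

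Both parts of the lemma now follow immediately. For part~(2), bilinearity in the first slot together with $[1, h] = 1$ gives
\[
  1 = [gg^{-1}, h] = [g,h]\,[g^{-1}, h],
\]
hence $[g^{-1}, h] = [g, h]^{-1}$. Applying part~(2) once in each slot then delivers part~(1):
\[
  [g^{-1}, h^{-1}] = [g^{-1}, h]^{-1} = \bigl([g, h]^{-1}\bigr)^{-1} = [g, h].
\]
There is no genuine obstacle here; the only thing requiring care is keeping track of the order of factors when expanding, and that is entirely controlled by the centrality of commutators, so the whole argument is elementary and self-contained.
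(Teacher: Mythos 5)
Your proof is correct: centrality of $[G,G]$ in a class $2$ nilpotent group does give bilinearity of the commutator bracket, and both identities follow as you describe. The paper itself offers no proof of this lemma (it is quoted from \cite{EDT0L_Heisenberg}, Lemma 2.3), and your argument is the standard one for that result, so there is nothing to compare beyond noting that your derivation is complete and self-contained.
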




  We now introduce the normal form we will be using for class \(2\) nilpotent
  groups. This is used in \cite{duchin_liang_shapiro}, and we include the proof
  of uniqueness and existence for completeness.

  The following lemma is used to define the Mal'cev generating set and normal
  form.

  \begin{lem}
    \label{malcev_gen_set_lem}
    Let \(G\) be a class 2 nilpotent group with a virtually cyclic
    commutator subgroup. Then \(G\) has a generating set
    \[
      \{a_1, \ \ldots, \ a_n, \ b_1, \ \ldots, \ b_r, \ c, \
      d_1, \ \ldots, \ d_t\},
    \]
    where \(n, \ r, \ t \in \mathbb{Z}_{> 0}\), such that the \(d_i\)s have
    finite order, \(c\) and the \(d_i\)s are central, for each \(b_i\), there
    exists \(l_i \in \mathbb{Z}_{> 0}\), such that \(b_i^{l_i} \in [G, \ G]\),
    and \([G, \ G] = \langle c, \ d_1, \ \ldots, \ d_t \rangle\).

    Moreover, every element of \(G\) can be expressed uniquely as an
    element of the set
    \begin{align}
      \{ & a_1^{i_1} \cdots a_n^{i_n} b_1^{j_1} \cdots b_r^{j_r}
      c^p d_1^{q_1} \cdots d_t^{q_t} \mid
      i_1, \ \ldots, \ i_n, \ p \in \mathbb{Z}, \\
      \nonumber
      & j_x \in \{0, \
      \ldots, l_x - 1\}, \ q_x \in \{0, \ \ldots, \ k_x - 1\} \text{ for each }
      x\}.
    \end{align}
  \end{lem}

  \begin{proof}
    Using the fundamental theorem for finitely generated abelian groups and the
    fact that \([G, \ G]\) is virtually cyclic, the short exact sequence
    \(\{1\} \to [G, \ G] \to G \to \faktor{G}{[G, \ G]} \to \{1\}\) becomes
    \[
    \{1\} \longrightarrow \mathbb{Z} \oplus (\mathbb{Z}_{k_1} \oplus \cdots
    \oplus \mathbb{Z}_{k_t}) \longrightarrow G \longrightarrow \mathbb{Z}^n
    \oplus (\mathbb{Z}_{l_1} \oplus \cdots \oplus \mathbb{Z}_{l_r})
    \longrightarrow \{1\},
    \]
    where \(n \in \mathbb{Z}_{>0}\)  and \(r, \ t \in \mathbb{Z}_{\geq 0}\). Let
    \(a_1, \ \ldots, \ a_n\) be lifts in \(G\) of standard generators for
    \(\mathbb{Z}^n\), \(b_1, \ \ldots, \ b_r\) be lifts of generators of
    \(\mathbb{Z}_{l_1}, \ \ldots, \ \mathbb{Z}_{l_r}\), respectively. Let \(c\)
    be a generator for \(\mathbb{Z}\), and \(d_1, \ \ldots, \ d_t\) be
    generators of \(\mathbb{Z}_{k_1}, \ \ldots, \ \mathbb{Z}_{k_t}\),
    respectively. Then using our short exact sequence, it follows that \(\{a_1,
    \ \ldots, \ a_n, \ b_1, \ \ldots, \ b_r, \ c, \ d_1, \ \ldots, \ d_t\}\)
    generates \(G\). We have that \(d_i^{k_i} = 1\), for all \(i\). As \(\{c, \
    d_1, \ \ldots, \ d_t\}\) generates \([G, \ G]\), the and we have shown that
    the generating set exists.

    We now turn our attention to the normal form, showing existence and
    uniqueness.

    Existence: Let \(g \in G\), and \(w\) be a word over our generating set that
    represents \(g\). As \(c\) and all \(d_i\)s are central, we can push them to
    the back of \(w\), and into the desired order. As \([a_i, \ a_j], \ [b_i, \
    b_j]\), and \([a_i, \ b_j]\) can be written as expressions using \(c\) the
    and \(d_i\)s, we have that reordering the \(a_i\)s and \(b_i\)s to the
    desired form simply creates expressions using \(c\)s and the \(d_i\)s, which
    can then be pushed to the back of \(w\), and into the stated order. Let \(i
    \in \{1, \ \ldots, \ r\}\). By definition, \([G, \ G] b_i^{l_i} = [G, \
    G]\), so we can reduce \(b_i\) modulo \(l_i\) by creating an expression over
    \(c\) and the \(d_i\)s, which, again, can be pushed to the back and into the
    desired form. Since the \(d_i\)s have finite order, we can reduce their
    exponents modulo these orders.

    Uniqueness: Let \(i_1, \ \ldots, \ i_{n + r + 1 + t} \in \mathbb{Z}\) and
    \(j_1, \ \ldots, \ j_{n + r + 1 + t} \in \mathbb{Z}\) be such that
    \[
      u = a_1^{i_1} \cdots a_n^{i_n} b_1^{i_{n + 1}} \cdots b_r^{i_{n + r}} c^{i_{
      n + r + 1}} d_1^{i_{n + r + 1 + 1}} \cdots
      d_t^{i_{n + r + 1 + t}}
    \]
    and
    \[
      v = a_1^{j_1} \cdots a_n^{j_n} b_1^{j_{n + 1}} \cdots b_r^{j_{n + r}} c^{j_{
      n + r + 1}} d_1^{j_{n + r + 1 + 1}} \cdots
      d_t^{j_{n + r + 1 + t}}
    \]
    and expressions in the normal form stated in the lemma. Suppose \(u =_G v\).
    Then \(u\) and \(v\) have the same image in the quotient of \(G\) by \([G, \
    G]\), and so \(a_1^{i_1} \cdots a_n^{i_n} b_1^{i_{n + 1}} \cdots b_r^{i_{n +
    r}} =_{G / [G, \ G]} a_1^{j_1} \cdots a_n^{j_n} b_1^{j_{n + 1}} \cdots
    b_r^{j_{n + r}}\). As these words are in the standard normal form for these
    finitely generated abelian groups, it follows that \(i_x = j_x\) for all \(x
    \in \{1, \ \ldots, \ n + r\}\). Thus
    \[
      c^{i_{ n + r + 1}} d_1^{i_{n + r + 1 + 1}} \cdots d_t^{i_{n + r + 1 + t}}
      =_G c^{j_{ n + r + 1}} d_1^{j_{n + r + 1 + 1}} \cdots d_t^{j_{n + r + 1 +
      t}}.
    \]
    As \([G, \ G]\) is abelian and these words are in the standard normal form
    for \([G, \ G]\), it follows that all of the exponents in these expressions
    must be equal; that is \(i_x = j_x\) for all \(x \in \{n + r + 1, \ \ldots,
    \ n + r + 1 + t\}\). We have now shown that \(u\) and \(v\) are equal as
    words, as required.

  \end{proof}

  \begin{defi}
    A generating set defined as in Lemma \ref{malcev_gen_set_lem} is called a
    \textit{Mal'cev generating set}, and the normal form defined in Lemma
    \ref{malcev_gen_set_lem} is called the \textit{Mal'cev normal form}.
  \end{defi}

  As we have seen in the proofs of the previous lemma, one can manipulate
  words in class \(2\) nilpotent groups with a virtually cyclic commutator
  subgroup by pushing \(a_i\)s past \(b_i\)s and paying a `cost' in \(c\)
  and the \(d_i\)s. Quantifying the `cost' for each such move will be
  necessary to convert a given equation in a class \(2\) nilpotent group into
  a system of equations in the ring of integers, with the `cost' appearing as
  constants in this system.

  \begin{nota}
    \label{commutator_exponent_ntn}
    We define a number of values for a group \(G\) with the Mal'cev
    generating set
    \[
      \{a_1, \ \ldots, \ a_n, \ b_1, \ \ldots, \ b_r, \ c, \
      d_1, \ \ldots, \ d_t\},
    \]
    where again, \(l_i\) is minimal (and exists) such that \(b_i^{l_i} \in
    [G, \ G]\) and the order of \(d_i\) is \(k_i\).
    \begin{enumerate}
      \item From Lemma \ref{malcev_gen_set_lem}, we have that \([a_i, \ a_j] , \
      [b_i, \ b_j], \ [a_i, \ b_j] \in \{c^p d_1^{q_1} \cdots d_t^{q_t} \mid p,
      \ q_1, \ \ldots, \ q_t \in \mathbb{Z}\}\), for all \(i, \ j\), with \(i <
      j\) in the first two expressions. For all such \(i\) and \(j\), \(k \in
      \{1, \ \ldots, \ s\}\), and \(l \in \{1, \ \ldots, \ t\}\), we can
      therefore define \(\alpha_{ijl}\), \(\beta_{ijl}\) and \(\gamma_{ijl}\) to
      be the unique integers satisfying the following normal form expressions in
      \([G, \ G]\):
      \begin{align*}
        [a_j, \ a_i] & = c^{\alpha_{ij0}}
        d_1^{\alpha_{ij1}} \cdots d_t^{\alpha_{ijt}}, \quad (i < j)
        \\
        [b_j, \ b_i] & = c^{\beta_{ij0}}
        d_1^{\beta_{ij1}} \cdots d_t^{\beta_{ijt}}, \quad (i < j) \\
        [a_i, \ b_j] & = c^{\gamma_{ij0}}
        d_1^{\gamma_{ij1}} \cdots d_t^{\gamma_{ijt}}.
      \end{align*}
      \item Since \(b_i^{l_i} \in [G, \ G]\), we can define \(\eta_{il}\) for
      all \(i \in \{1, \ \ldots, \ r\}\), and \(l \in \{0, \ \ldots, \ t\}\), to
      be the unique integers such that
      \[
        b_i^{l_i} = c^{\eta_{i0}} d_1^{\eta_{i1}}
        \cdots d_t^{\eta_{it}}.
      \]
    \end{enumerate}
  \end{nota}

\section{Transforming equations in nilpotent groups into equations in integers}
  \label{transforming_over_Z_sec}

    This section aims to prove Lemma \ref{Z_system_lem}; that is, a single
    equation \(\mathcal{E}\) in a class 2 nilpotent group with a virtually
    cyclic commutator subgroup is equivalent to a system \(S_\mathcal{E}\) over
    \(\mathbb{Z}\) of (1) linear equations and congruences, (2) a single
    quadratic equation and (3) quadratic congruences, where the quadratic
    equations and congruences can also contain `floor' terms.

    The idea of the proof is to replace each variable in \(\mathcal{E}\) with a
    word representing a potential solution, and then convert this new word into
    Mal'cev normal form. The linear equations in \(S_\mathcal{E}\) occur as the
    solution to the exponent of each generator \(a_i\) being set to \(0\), and
    the linear congruences, quadratic equation and quadratic congruences occur
    when the same is done for the \(b_i\)s, \(c\) and the \(d_i\)s,
    respectively.
    %

    We begin with an example of this process.

    \begin{exa}
      \label{torsion_and_csts_ex}
      Let \(G\) be the class \(2\) nilpotent group with the presentation
      \begin{align*}
        \langle & a_1, \ a_2, \ b, \ c, \ d \mid c = [a_1, \ a_2], \
        d = [a_1, \ b] = [a_2, \ b], \ b^2 = c, \ d^2 = 1, \\
        &  [a_1, \ c] = [a_1, \ d] =
        [a_2, \ c] = [a_2, \ d] = [b, \ c] = [b, \ d] = 1 \rangle.
      \end{align*}
      Consider the equation
      \begin{align}
        \label{torsion_and_csts_ex_eqn}
        X b a_1 c X a_2 c^{-3} a_1 X  = 1
      \end{align}
      We first transform the constants in this equation into Mal'cev normal
      form, push all the commutators to the right, and then use the relation
      \(d^2 = 1\) to obtain
      \begin{align}
        \label{comms_to_back_eqn}
        X a_1 b X a_1 a_2 X c^{-3} d = 1.
      \end{align}
      We set \(X = a_1^{X_1}
      a_2^{X_2} b^{X_3} c^{X_4} d^{X_5}\) using our Mal'cev normal form, for
      new variables \(X_1, \ldots, \ X_5\) over \(\mathbb{Z}\). Plugging this
      into (\ref{comms_to_back_eqn}) gives
      \begin{align}
        \label{plug_in_torsison_case_eqn}
        a_1^{X_1} a_2^{X_2} b^{X_3} c^{X_4} d^{X_5} a_1 b
        a_1^{X_1} a_2^{X_2} b^{X_3} c^{X_4} d^{X_5} a_1 a_2
        a_1^{X_1} a_2^{X_2} b^{X_3} c^{X_4} d^{X_5} c^{-3} d = 1.
      \end{align}
      We can the transform this into Mal'cev normal form, to (first) obtain
      \begin{align}
        \label{first_malcev_torsion_case_eqn}
          & a_1^{3X_1 + 2} a_2^{3 X_2 + 1} b^{3 X_3 + 1} c^{3 X_4 + X_1(1 +
          X_2 + X_2) + X_1X_2 - 3} \\
          \nonumber
          & d^{3 X_5 + X_2(X_3 + 1 + X_3) + X_1(X_3 + 1
          + X_3) + (X_3 + 1 + X_3) + (X_3 + 1 + X_3) + X_2(1 + X_3) + X_1(1 +
          X_3) + X_3 + 2 + 1} = 1.
      \end{align}
      Simplifying this gives
      \begin{align}
        \label{second_malcev_torsion_case_eqn}
        a_1^{3X_1 + 2} a_2^{3 X_2 + 1} b^{3 X_3 + 1} c^{3 X_1 X_2 + X_1 + 3X_4
        - 3} d^{3 X_1 X_3 + 3 X_2 X_3 + 4X_1 + 4X_2 + 5X_3 + 3X_5 + 5} = 1.
      \end{align}
      Using the relations \(b^2 = c\) and \(d^2 = 1\), we can conclude
      \begin{align}
        \label{third_malcev_torsion_case_eqn}
        a_1^{3X_1 + 2} a_2^{3 X_2 + 1} b^{(X_3 + 1)\shortmod 2} c^{3 X_1 X_2 +
        X_1 + 3X_4 - 3 + \left\lfloor \frac{X_3 + 1}{2} \right\rfloor + X_3}
        d^{(X_1 X_3 + X_2 X_3 + X_3 + X_5 + 1) \shortmod 2} = 1.
      \end{align}
      This results in the following system of equations over (the ring)
      \(\mathbb{Z}\)
      \begin{align}
        \label{torsion_case_Z_system}
        & 3 X_1 + 2 = 0 \\
        \nonumber
        & 3 X_2 + 1 = 0 \\
        \nonumber
        & X_3 + 1 \equiv 0 \mod 2 \\
        \nonumber
        & 3 X_1 X_2 + X_1 + X_3 + \left\lfloor \frac{X_3 + 1}{2} \right\rfloor +
        3 X_4 - 3 = 0 \\
        \nonumber
        & X_1 X_3 + X_2 X_3 + X_3 + X_5 + 1 \equiv 0 \mod 2.
      \end{align}
      As \(3X_1 + 2 = 0\) admits no integer solutions, we can conclude that our
      equation (\ref{torsion_and_csts_ex_eqn}) does not admit a solution.
    \end{exa}

    \begin{nota}
      \label{eqn_ntn}
      Let \(G\) be a class 2 nilpotent group, \(X_1, \ \ldots, \ X_M\) be
      variables, where \(M \in \mathbb{Z}_{> 0}\). Let \(N \in \mathbb{Z}_{>
      0}\), \(\epsilon_1, \ \ldots \epsilon_N \in \{-1, \ 1\}\), and
      \begin{equation}
        \label{class_2_general_eqn}
        \omega_1 X_{p_1}^{\epsilon_1} \cdots \omega_N X_{p_N}^{\epsilon_N} = 1
      \end{equation}
      be an equation over \(G\), where \(\omega_1, \ \ldots, \ \omega_N\) are
      words in Mal'cev normal form, over a Mal'cev generating set for \(G\), as
      constructed in Lemma \ref{malcev_gen_set_lem} and \(p_1, \ \ldots, \
      p_M \in \{1, \ \ldots, \ M\}\). We will also use the
      notation introduced in Lemma \ref{malcev_gen_set_lem} for the generators.
      We will use \(\boldsymbol{\nu}_1, \ \ldots, \ \boldsymbol{\nu}_N\) to be a
      potential solution. We define a number of values based on the
      \(\omega_z\)s and \(\boldsymbol{\nu}_z\)s. To make it clearer, the
      potential solution is shown in bold.

      For each Mal'cev generator \(a\), we will define \(\boldsymbol{\nu}_{z, a}\) and
      \(\omega_{z, a}\) by:
      \begin{align*}
        \boldsymbol{\nu}_{z, a} = \expsum_a(\boldsymbol{\nu}_z), \qquad
        \omega_{z, a} = \expsum_a(\omega_z).
      \end{align*}

      By convention we will often use \(d_0 = c\) and take \(\equiv_{k_0}\) to
      be equality (since \(c\) is infinite order and \(k_0\) is being used to
      represent the order of \(d_0 = c\), this equality modulo \(k_0\)
      is true equality).
    \end{nota}

    This lemma transforms the equation that we obtained from
    (\ref{class_2_general_eqn}) to obtain a system of linear and quadratic
    equations and congruences over the integers. We do this by first
    transforming the equation with the potential solution subbed in into Mal'cev
    normal form. This corresponds to moving from
    (\ref{first_malcev_torsion_case_eqn}) to
    (\ref{third_malcev_torsion_case_eqn}) in Example \ref{torsion_and_csts_ex}.
    Following that, we equate all of the exponents in this word to zero, given
    our system, which is done to obtain (\ref{torsion_case_Z_system}) in Example
    \ref{torsion_and_csts_ex}, respectively. The capital Latin alphabet
    characters are constants derived from the constants of the equation, and the
    group's structure. Recall that the \(\boldsymbol{\nu}_{z, a}\)s represent
    variables over \(\mathbb{Z}\) (see Notation \ref{eqn_ntn}). For \(i \in \{1,
    \ -1\}\) will use \(\delta_i = 1\) if \(i = -1\) and \(\delta_i = 0\)
    otherwise.

    \begin{lem}
      \label{system_over_Z_technical_lem}
      The words \(\boldsymbol{\nu}_1, \ \ldots, \ \boldsymbol{\nu}_M\) form a
      solution to (\ref{class_2_general_eqn}) in a class \(2\) nilpotent group
      with a  virtually cyclic commutator subgroup, if and only if the following
      equations and congruences hold:
      \begin{align}
        \label{a_m_eqn}
        & A_m + \sum_{z = 1}^N \epsilon_z \boldsymbol{\nu}_{p_z, a_m} = 0, \\
        \label{b_m_eqn}
        & B_m + \sum_{z = 1}^N \epsilon_z \boldsymbol{\nu}_{p_z, b_m} \equiv_{l_m} 0,\\
        \label{d_m_eqn}
        & D_m + \sum_{z = 1}^N \epsilon_z \boldsymbol{\nu}_{p_z, d_m}
        - \mathop{\sum_{z, u = 1}^N}_{u < z} \sum_{j = 1}^n \epsilon_u \boldsymbol{\nu}_{p_u, a_j} K_{mzj}
        + \mathop{\sum_{z, u = 1}^N}_{u < z} \sum_{j = 1}^r \epsilon_u \boldsymbol{\nu}_{p_u, b_j} L_{mzj}
        - \mathop{\sum_{z, u = 1}^N}_{u \leq z} \sum_{i = 1}^n \epsilon_t \boldsymbol{\nu}_{p_t, a_i}
         \\
        & - J_{mui} - \mathop{\sum_{z, u = 1}^N}_{u < z}
        \mathop{\sum_{i, j = 1}^n}_{i < j} \epsilon_z \epsilon_u \boldsymbol{\nu}_{p_z, a_i} \boldsymbol{\nu}_{p_u, a_j}
        \alpha_{ijm}
        - \mathop{\sum_{z, u = 1}^N}_{u < z} \sum_{i = 1}^n \sum_{j = 1}^r
        \epsilon_z \epsilon_u \boldsymbol{\nu}_{p_z, a_i} \boldsymbol{\nu}_{p_u, b_j} \gamma_{ijm}
        \nonumber \\
        & - \mathop{\sum_{z, u = 1}^N}_{u < z}
        \sum_{j = 1}^r \boldsymbol{\nu}_{p_u, b_j} M_{mzj}
        - \mathop{\sum_{z, u = 1}^N}_{u \leq z} \sum_{i = 1}^r \epsilon_z \boldsymbol{\nu}_{p_z, b_i}
        - O_{mui}
        - \mathop{\sum_{z, u = 1}^N}_{u < z}
        \mathop{\sum_{i, j = 1}^r}_{i < j} \epsilon_z \epsilon_u \boldsymbol{\nu}_{p_z, b_i} \boldsymbol{\nu}_{p_u, b_j} \beta_{ijm}
        \nonumber
        \\
        & - \sum_{z = 1}^N \sum_{i = 1}^r \eta_{im} \left \lfloor
        \frac{\omega_{z, b_i} + \epsilon_z \boldsymbol{\nu}_{p_z, b_i}}{l_i} \right \rfloor
        \nonumber \\
        \nonumber
        & - \sum_{z = 1}^N  \delta_{\epsilon_z} \left(
        \mathop{\sum_{i, j = 1}^n}_{i < j} \alpha_{ijm} \boldsymbol{\nu}_{p_z, a_i} \boldsymbol{\nu}_{p_z, a_j}
        + \sum_{i = 1}^n \sum_{j = 1}^r \gamma_{ijm}
        \boldsymbol{\nu}_{p_z, a_i} \boldsymbol{\nu}_{p_z, b_j}
        + \mathop{\sum_{i, j = 1}^r}_{i < j} \beta_{ijm} \boldsymbol{\nu}_{p_z, b_i}
        \boldsymbol{\nu}_{p_z, b_j} \right)
        \equiv_{k_m} 0
      \end{align}
      where for all \(m\)
      \begin{align*}
        A_m & = \sum_{z = 1}^N \omega_{z, a_m},
        & B_m & = \sum_{z = 1}^N \omega_{z, b_m}, \\
        J_{mui} & = \mathop{\sum_{j = 1}^n}_{i < j} \omega_{u, a_j}
        \alpha_{ijm} + \sum_{j = 1}^r \omega_{u, b_j} \gamma_{ijm},
        & K_{mzj} & = \mathop{\sum_{i = 1}^n}_{i < j}
        \omega_{z, a_i} \alpha_{ijm}, \\
        L_{mzj} & = \sum_{i = 1}^n \omega_{z, a_i} \gamma_{ijm},
        & M_{mzj} & = \mathop{\sum_{i = 1}^r}_{i < j}
        \omega_{z, b_i} \beta_{ijm}, \\
        O_{mui} & = \mathop{\sum_{j = 1}^r}_{i < j} \omega_{u, b_j}
        \beta_{ijm},
      \end{align*}

      \begin{align*}
        D_m & = \sum_{z = 1}^N \omega_{z, d_m}
        \\
        & \quad - \mathop{\sum_{z, u = 1}^N}_{u < z}
        \mathop{\sum_{i, j = 1}^n}_{i < j} \omega_{z, a_i} \alpha_{ijm}
        \omega_{u, a_j}
        - \mathop{\sum_{z, u = 1}^N}_{u < z} \sum_{i = 1}^n \sum_{j = 1}^r
        \omega_{z, a_i} \gamma_{ijm} \omega_{u, b_j}
        - \mathop{\sum_{z, u = 1}^N}_{u < z} \mathop{\sum_{i, j = 1}^r}_{i < j}
         \omega_{z, b_i} \beta_{ijm} \omega_{u, b_j}.
      \end{align*}
    \end{lem}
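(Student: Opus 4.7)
The plan is to compute the Mal'cev normal form of the word obtained from the left-hand side of (\ref{class_2_general_eqn}) after substituting \(\boldsymbol{\nu}_z\) for \(X_z\), and then to apply the uniqueness statement of \cref{malcev_norm_form_lem}: the resulting element equals \(1\) if and only if the exponent of each \(a_m\) is zero, the exponent of each \(b_m\) is congruent to zero modulo \(l_m\), the exponent of \(c\) is zero, and the exponent of each \(d_m\) is congruent to zero modulo \(k_m\). These four conditions yield respectively (\ref{a_m_eqn}), (\ref{b_m_eqn}), the \(m = 0\) case of (\ref{d_m_eqn}) (a single quadratic equation under the convention \(d_0 = c\) and \(\equiv_{k_0}\) being equality), and the remaining congruences in (\ref{d_m_eqn}). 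Both directions of the biconditional then follow simultaneously from a single normal-form computation.

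First I would handle the case \(\epsilon_z = -1\) by rewriting \(\boldsymbol{\nu}_z^{-1}\) in Mal'cev normal form. Reversing the word and inverting each letter produces the letters in the opposite order, so they must be re-sorted. Each swap of \(a_j^{-\boldsymbol{\nu}_{z, a_j}}\) past \(a_i^{-\boldsymbol{\nu}_{z, a_i}}\) (with \(i < j\)) costs \([a_j, a_i]^{\boldsymbol{\nu}_{z, a_i}\boldsymbol{\nu}_{z, a_j}}\) by \cref{commutator_lem}, and analogously for \(a\)-\(b\) and \(b\)-\(b\) pairs. These internal sortings are precisely the contribution controlled by \(\delta_{\epsilon_z}\) in the final line of (\ref{d_m_eqn}). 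After this step, each factor \(\omega_z \boldsymbol{\nu}_z^{\epsilon_z}\) consists of two adjacent blocks already in Mal'cev normal form, with the variable block carrying the exponents \(\epsilon_z \boldsymbol{\nu}_{z, \star}\).

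Next I would concatenate the \(N\) such pairs and push the entire product into a single Mal'cev normal form. Centrality of \(c\) and the \(d_i\) lets them slide to the right for free. The real work is sorting the \(a\)s and \(b\)s: for every ordered pair of positions \(u < z\), every \(a_i\) (or \(b_i\)) from the \(u\)-th block must move past every \(a_j\) (or \(b_j\)) of the \(z\)-th block with \(j > i\), paying \(\alpha_{ijm}\), \(\gamma_{ijm}\), or \(\beta_{ijm}\) per unit of exponent-product into the \(d_m\)-exponent. Splitting these contributions according to whether each of the two exponents is a variable \(\epsilon_z \boldsymbol{\nu}_{z, \star}\) or a constant \(\omega_{u, \star}\) produces, respectively, the pure quadratic sums in \(\boldsymbol{\nu}\), the linear sums whose coefficients are packaged as \(J_{mui}, K_{mzj}, L_{mzj}, M_{mzj}, O_{mui}\) (one variable paired with one constant), and the purely constant part that is absorbed into \(A_m, B_m, D_m\); the signs and the index ranges (\(u < z\) versus \(u \leq z\), and \(i < j\)) in (\ref{d_m_eqn}) simply record which factor lies in which block. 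The \(a_m\)- and \(b_m\)-exponents remain linear at this stage, giving (\ref{a_m_eqn}) and a preliminary form of (\ref{b_m_eqn}).

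Finally I would reduce each \(b_i\)-exponent \(\omega_{z, b_i} + \epsilon_z \boldsymbol{\nu}_{z, b_i}\) modulo \(l_i\) using \(b_i^{l_i} = c^{\eta_{i0}} d_1^{\eta_{i1}} \cdots d_t^{\eta_{it}}\), so that the quotient \(\lfloor (\omega_{z, b_i} + \epsilon_z \boldsymbol{\nu}_{z, b_i})/l_i \rfloor\) times \(\eta_{im}\) contributes to the \(d_m\)-exponent; this is the origin of the floor terms in (\ref{d_m_eqn}). A subsequent reduction of the \(d_m\)-exponents modulo \(k_m\) gives the stated congruences. The main obstacle is the combinatorial bookkeeping in the previous paragraph: the coefficients in (\ref{d_m_eqn}) must match, one-for-one, the total commutator contribution from every pair of letters, whether the two letters lie in different blocks, in the same block but with one from \(\omega_z\) and one from \(\boldsymbol{\nu}_z^{\epsilon_z}\), or both inside \(\boldsymbol{\nu}_z^{\epsilon_z}\) (contributing only when \(\epsilon_z = -1\), hence the \(\delta_{\epsilon_z}\)). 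Keeping the \(u < z\), \(u \leq z\), and \(i < j\) ranges and the \(\epsilon_z \epsilon_u\) signs in agreement with the direct expansion is where essentially all the effort lies; once that is laid out, uniqueness of Mal'cev normal form closes both implications at once.
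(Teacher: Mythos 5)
Your proposal follows essentially the same route as the paper: substitute the candidate solution, normalise each inverted block (producing the $\delta_{\epsilon_z}$ quadratic terms), sort the $a$s and $b$s across blocks while tracking the commutator cost split into constant, linear and quadratic parts, reduce the $b_i$-exponents modulo $l_i$ to produce the floor terms, and invoke uniqueness of the Mal'cev normal form to get both implications at once. The bookkeeping you describe matches the paper's block-by-block accumulation, so this is the same argument.
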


    \begin{proof}
      Let \(w = \omega_1 \boldsymbol{\nu}_{p_1}^{\epsilon_1} \cdots \omega_M
      \boldsymbol{\nu}_{p_M}^{\epsilon_M}\) by the left-hand side of the equation
      \eqref{class_2_general_eqn}, with the potential solution
      \((\boldsymbol{\nu}_1, \ \ldots, \ \boldsymbol{\nu}_N)\) plugged in. By
      pushing the \(d_m\)s (recall that \(c = d_0\)) to the end of \(w\), we
      have that \(w\) now comprises \(2N\) words over \(\{a_1, \ \ldots, \
      b_r\}^\pm\) in Mal'cev normal form followed by an expression of \(d_m\)s.
      We will now convert \(w\) into Mal'cev normal form, in order to compare
      the exponents of the generators of this normal form version for \(w\) to
      \(0\). From now on, whenever we modify \(w\), we will continue to use the
      fact that the \(d_m\)s are central to push them to the right.

      Using Notation \ref{commutator_exponent_ntn}, if \(i < j\), then \(a_j a_i
      = a_i a_j [a_i, \ a_j]^{-1} = a_i a_j d_0^{-\alpha_{ij0}} d_1^{-\alpha_{ij1}}
      \cdots d_t^{-\alpha_{ijt}}\) and \(b_j b_i = b_i b_j d_0^{-\beta_{ij0}}
      d_1^{-\beta_{ij1}} \cdots d_t^{-\beta_{ijt}}\). Similarly, for any \(i\) and
      \(j\), \(b_j a_i = a_i b_j d_0^{-\gamma_{ij0}} d_1^{-\gamma_{ij1}} \cdots
      d_t^{-\gamma_{ijt}}\). We will use this to reorder all of the subwords
      \((a_1^{\boldsymbol{\nu}_{p_z, a_1}} \cdots a_n^{\boldsymbol{\nu}_{p_z,
      a_n}} b_1^{\boldsymbol{\nu}_{p_z, b_1}} \cdots b_r^{\boldsymbol{\nu}_{p_z,
      b_r}})^{\epsilon_z}\) into a word within \((a_1^\pm)^\ast \cdots
      (a_n^\pm)^\ast (b_1^\pm)^\ast \cdots (b_r^\pm)^\ast (d_0^\pm)^\ast
      (d_1^\pm)^\ast \cdots (d_t^\pm)^\ast\), subject to `creating' some
      additional commutators, which are then pushed to the right. Note that if
      \(\epsilon_z = 1\), then the word is already in the desired form, so
      consider when \(\epsilon_z = -1\). Let \(u =
      \boldsymbol{\nu}_z^{\epsilon_z}\) be such a subword (that is, \(\epsilon_z
      = -1\)). Then
      \[
        u = b_r^{-\boldsymbol{\nu}_{p_z, b_r}} \cdots b_1^{-\boldsymbol{\nu}_{p_z, b_1}}
        a_n^{-\boldsymbol{\nu}_{p_z, a_n}} \cdots a_1^{-\boldsymbol{\nu}_{p_z, a_1}}.
      \]
      We will start at the right, and push terms to the left. We have that
      the \(a_1\)s will have to be pushed past everything (except each other),
      the \(a_2\)s will need to be pushed past everything except the \(a_1\)s,
      and so on up to the \(b_{r - 1}\)s, which will only need to be pushed past
      the \(b_r\)s, and the \(b_r\)s which will not need to be pushed past
      anything, as they will now be in the correct place. Thus
      \begin{align*}
        u = & a_1^{-\boldsymbol{\nu}_{p_z, a_1}} \cdots a_n^{-\boldsymbol{\nu}_{p_z, a_n}}
        b_1^{-\boldsymbol{\nu}_{p_z, b_1}} \cdots b_r^{-\boldsymbol{\nu}_{p_z, b_r}} \\
        & \prod_{m = 0}^t d_m^{\displaystyle - \mathop{\sum_{i, j = 1}^n}_{i < j} \alpha_{ijm}
        \boldsymbol{\nu}_{p_z, a_i} \boldsymbol{\nu}_{p_z, a_j}
        - \sum_{i = 1}^n \sum_{j = 1}^r \gamma_{ijm} \boldsymbol{\nu}_{p_z, a_i}
        \boldsymbol{\nu}_{p_z, b_j}
        - \mathop{\sum_{i, j = 1}^r}_{i < j} \beta_{ijm} \boldsymbol{\nu}_{p_z, b_i} \boldsymbol{\nu}_{p_z, b_j}}.
      \end{align*}
      Now consider the general case for \(u = \boldsymbol{\nu}_z^{\epsilon_z}\),
      with \(\epsilon_z \in \{-1, \ 1\}\). We have
      \begin{align*}
        u = & a_1^{\epsilon_z \boldsymbol{\nu}_{p_z, a_1}} \cdots a_n^{\epsilon_z \boldsymbol{\nu}_{p_z,
        a_n}} b_1^{\epsilon_z \boldsymbol{\nu}_{p_z, b_1}} \cdots b_r^{\epsilon_z \boldsymbol{\nu}_{p_z,
        b_r}} \\
        & \prod_{m = 0}^t d_m^{\displaystyle - \delta_{\epsilon_z} \left(
        \mathop{\sum_{i, j = 1}^n}_{i < j} \alpha_{ijm} \boldsymbol{\nu}_{p_z, a_i} \boldsymbol{\nu}_{p_z, a_j}
        + \sum_{i = 1}^n \sum_{j = 1}^r \gamma_{ijm}
        \boldsymbol{\nu}_{p_z, a_i} \boldsymbol{\nu}_{p_z, b_j}
        + \mathop{\sum_{i, j = 1}^r}_{i < j} \beta_{ijm} \boldsymbol{\nu}_{p_z, b_i}
        \boldsymbol{\nu}_{p_z, b_j} \right)}.
      \end{align*}

      We now push all \(a_i\)s to the left, whilst calculating the
      cost in \(d_m\)s. For \(\omega_1\) there is nothing to do. For
      \(\boldsymbol{\nu}_1\), we have \(\boldsymbol{\nu}_{1, a_i}\) \(a_i\)s,
      and we must move each of these past \(\omega_{1, a_j}\) \(a_j\)s (where
      \(j > i\)), and past \(\omega_{1, b_j}\) \(b_j\)s, (where \(j\) is
      arbitrary). So moving the \(a_i\)s to the left (provided all lower indexed
      \(a_i\)s have already been moved) will increase the number of
      \(d_m\)s by
      \[
        - \mathop{\sum_{i, j = 1}^n}_{i < j} \epsilon_1 \boldsymbol{\nu}_{p_1, a_i} \omega_{1, a_j}
        \alpha_{ijm}
        - \sum_{i = 1}^n \sum_{j = 1}^r \epsilon_1 \boldsymbol{\nu}_{p_1, a_i} \omega_{1, b_j} \gamma_{ijm},
      \]
      Doing the same for the \(a_i\)s in \(\omega_2\), we will now
      have to push them past the \(a_j\)s and \(b_j\)s in \(\omega_1\) and
      \(\boldsymbol{\nu}_1\), so this will increase the number of \(d_m\)s by
      \begin{align*}
        & - \mathop{\sum_{i, j = 1}^n}_{i < j} \omega_{2, a_i} \alpha_{ijm}
        (\omega_{1, a_j} + \epsilon_1 \boldsymbol{\nu}_{p_1, a_j} )
        - \sum_{i = 1}^n \sum_{j = 1}^r \omega_{2, a_i} \gamma_{ijm}( \omega_{1,
        b_j} + \epsilon_1 \boldsymbol{\nu}_{p_1, b_j}),
      \end{align*}
      respectively. Proceeding in this manner for the remaining \(\omega_t\)s
      and \(\boldsymbol{\nu}_t\)s gives the total increase of the \(d_m\)s as
      \begin{align*}
        & - \mathop{\sum_{t, u = 1}^N}_{u < t} \left(
        \mathop{\sum_{i, j = 1}^n}_{i < j}
        \omega_{t, a_i} \alpha_{ijm} (\omega_{u, a_j} + \epsilon_u \boldsymbol{\nu}_{p_u, a_j} )
        + \sum_{i = 1}^n \sum_{j = 1}^r \omega_{t, a_i} \gamma_{ijm}
        (\omega_{u, b_j} + \epsilon_u \boldsymbol{\nu}_{p_u, b_j}) \right) \\
        & - \mathop{\sum_{t, u = 1}^N}_{u \leq t} \left(\mathop{\sum_{i, j =
        1}^n}_{i < j} \epsilon_t \boldsymbol{\nu}_{p_t, a_i} \omega_{u, a_j} \alpha_{ijm}
        + \sum_{i = 1}^n \sum_{j = 1}^r \epsilon_t \boldsymbol{\nu}_{p_t, a_i} \omega_{u, b_j}
        \gamma_{ijm} \right) \\
        & - \mathop{\sum_{t, u = 1}^N}_{u < t}
        \left( \mathop{\sum_{i, j = 1}^n}_{i < j} \epsilon_t \epsilon_u \boldsymbol{\nu}_{p_t, a_i} \boldsymbol{\nu}_{p_u, a_j}
        \alpha_{ijm} + \sum_{i = 1}^n \sum_{j = 1}^r \epsilon_t \epsilon_u \boldsymbol{\nu}_{p_t, a_i} \boldsymbol{\nu}_{p_u, b_j}
        \gamma_{ijm} \right).
      \end{align*}
      This occurs in (\ref{first_malcev_torsion_case_eqn}) and
      (\ref{second_malcev_torsion_case_eqn}) in Example
      \ref{torsion_and_csts_ex}. We will now reorder the \(b_i\)s, which occurs
      in (\ref{first_malcev_torsion_case_eqn}) and
      (\ref{second_malcev_torsion_case_eqn}) in Example
      \ref{torsion_and_csts_ex}.  Again, those in \(\omega_1\) are already in
      position, and pushing those in \(\boldsymbol{\nu}_1\) into place increases
      the number of \(d_m\)s by
      \[
        - \mathop{\sum_{i, j = 1}^r}_{i < j} \epsilon_1 \boldsymbol{\nu}_{p_1, b_i}
        \omega_{1, b_j} \beta_{ijm}.
      \]
      Doing the same for \(\omega_2\) increases the number by
      \begin{align*}
        - \mathop{\sum_{i, j = 1}^r}_{i < j} \omega_{2, b_i} \beta_{ijm}
        (\omega_{1, b_j} + \epsilon_1 \boldsymbol{\nu}_{p_1, b_j}).
      \end{align*}
      Doing the same for all \(\omega_z\)s and \(\boldsymbol{\nu}_{p_z}\)s increases
      the exponent sum of the \(d_m\)s by
      \begin{align*}
        - \mathop{\sum_{t, u = 1}^N}_{u < t} \left(
        \mathop{\sum_{i, j = 1}^r}_{i < j}
        \omega_{t, b_i} \beta_{ijm} (\omega_{u, b_j} + \epsilon_u \boldsymbol{\nu}_{p_u, b_j} )\right)
        - \mathop{\sum_{t, u = 1}^N}_{u \leq t}
        \mathop{\sum_{i, j = 1}^r}_{i < j} \epsilon_t \boldsymbol{\nu}_{p_t, b_i} \omega_{u, b_j}
        \beta_{ijm}
        - \mathop{\sum_{t, u = 1}^N}_{u < t} \mathop{\sum_{i, j = 1}^r}_{i < j}
        \epsilon_t \epsilon_u \boldsymbol{\nu}_{p_t, b_i} \boldsymbol{\nu}_{p_u, b_j} \beta_{ijm}.
      \end{align*}
      It remains to reduce the \(b_i\)s with respect to their modularities, as
      is done in (\ref{third_malcev_torsion_case_eqn}) in Example
      \ref{torsion_and_csts_ex}. We have that doing so increases the number of
      \(d_m\)s by
      \[
        - \sum_{t = 1}^N \sum_{i = 1}^r \eta_{im} \left \lfloor
        \frac{\omega_{t, b_i} + \epsilon_t \boldsymbol{\nu}_{p_t, b_i}}{l_i} \right \rfloor,
      \]
      respectively. Recall that our floor terms round towards zero. We have now
      converted \(w\) to normal form. So the normal form version of \(w\) is
      trivial if and only if all of the exponents of the \(a_i\)s, \(b_i\)s,
      \(d_i\)s in its normal form are equal to \(0\). That is, for all valid
      \(m\), the following system of equations hold. As each of the following
      equations is computed by setting the exponent of a generator to \(0\), we
      give the generator responsible for each equation in brackets next to the
      equation. This corresponds to going from
      (\ref{third_malcev_torsion_case_eqn}) to (\ref{torsion_case_Z_system}) in
      Example \ref{torsion_and_csts_ex}. We given in brackets at the left of
      each equation the generator that is being equated to zero to obtain this
      equation. Recall again we are using \(d_0\) to represent \(c\).
      \begin{align*}
        (a_m) \quad & \sum_{t = 1}^N \omega_{t, a_m}
        + \sum_{t = 1}^N \epsilon_t \boldsymbol{\nu}_{p_t, a_m} = 0, \\
        (b_m) \quad & \sum_{t = 1}^N \omega_{t, b_m}
        + \sum_{t = 1}^N \epsilon_t \boldsymbol{\nu}_{p_t, b_m} \equiv_{l_m} 0, \\
        (d_m) \quad & \sum_{t = 1}^N \omega_{t, d_m}
        + \sum_{t = 1}^N \epsilon_t \boldsymbol{\nu}_{p_t, d_m}
        \\
        & - \mathop{\sum_{t, u = 1}^N}_{u < t} \left(
        \mathop{\sum_{i, j = 1}^n}_{i < j}
        \omega_{t, a_i} \alpha_{ijm} (\omega_{u, a_j} + \epsilon_u \boldsymbol{\nu}_{p_u, a_j} )
        + \sum_{i = 1}^n \sum_{j = 1}^r \omega_{t, a_i} \gamma_{ijm}
        (\omega_{u, b_j} + \epsilon_u \boldsymbol{\nu}_{p_u, b_j}) \right) \\
        & - \mathop{\sum_{t, u = 1}^N}_{u \leq t}
        \left(\mathop{\sum_{i, j = 1}^n}_{i < j} \epsilon_t \boldsymbol{\nu}_{p_t, a_i} \omega_{u, a_j}
        \alpha_{ijm}
        + \sum_{i = 1}^n \sum_{j = 1}^r \epsilon_t \boldsymbol{\nu}_{p_t, a_i} \omega_{u, b_j}
        \gamma_{ijm} \right) \\
        \nonumber
        & - \mathop{\sum_{t, u = 1}^N}_{u < t} \left(
        \mathop{\sum_{i, j = 1}^n}_{i < j} \epsilon_t \epsilon_u \boldsymbol{\nu}_{p_t, a_i} \boldsymbol{\nu}_{p_u, a_j}
        \alpha_{ijm} +
        \sum_{i = 1}^n \sum_{j = 1}^r \epsilon_t \epsilon_u \boldsymbol{\nu}_{p_t, a_i} \boldsymbol{\nu}_{p_u, b_j} \gamma_{ijm}
        \right) \\
        & - \mathop{\sum_{t, u = 1}^N}_{u < t} \left(
        \mathop{\sum_{i, j = 1}^r}_{i < j}
        \omega_{t, b_i} \beta_{ijm} (\omega_{u, b_j} + \epsilon_u \boldsymbol{\nu}_{p_u, b_j} )\right)
        - \mathop{\sum_{t, u = 1}^N}_{u \leq t}
        \mathop{\sum_{i, j = 1}^r}_{i < j} \epsilon_t \boldsymbol{\nu}_{p_t, b_i} \omega_{u, b_j}
        \beta_{ijm}
        \\
        & - \mathop{\sum_{t, u = 1}^N}_{u < t}
        \mathop{\sum_{i, j = 1}^r}_{i < j} \epsilon_t \epsilon_u \boldsymbol{\nu}_{p_t, b_i} \boldsymbol{\nu}_{p_u, b_j} \beta_{ijm}
        - \sum_{t = 1}^N \sum_{i = 1}^r \eta_{im} \left \lfloor
        \frac{\omega_{t, b_i} + \epsilon_t \boldsymbol{\nu}_{p_t, b_i}}{l_i} \right \rfloor
         \\
        & - \sum_{z = 1}^N  \delta_{\epsilon_z} \left(
        \mathop{\sum_{i, j = 1}^n}_{i < j} \alpha_{ijm} \boldsymbol{\nu}_{p_z, a_i} \boldsymbol{\nu}_{p_z, a_j}
        + \sum_{i = 1}^n \sum_{j = 1}^r \gamma_{ijm}
        \boldsymbol{\nu}_{p_z, a_i} \boldsymbol{\nu}_{p_z, b_j}
        + \mathop{\sum_{i, j = 1}^r}_{i < j} \beta_{ijm} \boldsymbol{\nu}_{p_z, b_i}
        \boldsymbol{\nu}_{p_z, b_j} \right)\equiv_{k_m} 0
      \end{align*}
      Replacing constants in these equations with the constants stated in the
      lemma completes the proof.
    \end{proof}

  We use the following definitions to restate Lemma
  \ref{system_over_Z_technical_lem} in an easier format.

  \begin{defi}
    A \textit{quadratic function} from \(\mathbb{Z}^n\) to \(\mathbb{Z}\), where
    \(n \in \mathbb{Z}_{> 0}\), is a function \(f \colon \mathbb{Z}^n
    \to \mathbb{Z}\) such that there exist \(a_{ij} \in \mathbb{Z}\) for each
    \(i, j \in \{1, \ \ldots, \ n\}\) and \(b_1,
    \ \ldots, \ b_n, \ c \in \mathbb{Z}\), such that for all \((x_1, \ \ldots, \
    x_n) \in \mathbb{Z}^n\),
    \[
      f(x_1, \ \ldots, \ x_n) = \sum_{i = 1}^n \sum_{j = 1}^n
      a_{ij} x_i x_j + \sum_{i = 1}^m b_i x_i + c.
    \]
    A \textit{linear function} from \(\mathbb{Z}^n \to \mathbb{Z}\) is a
    function \(f \colon \mathbb{Z}^n \to \mathbb{Z}\), such that there exist
    \(b_1, \ \ldots, \ b_n, \ c \in \mathbb{Z}\), such that for all \((x_1, \
    \ldots, \ x_n) \in \mathbb{Z}^n\),
    \[
      f(x_1, \ \ldots, \ x_n) = \sum_{i = 1}^m b_i x_i + c.
    \]
  \end{defi}

  \begin{defi}
    Let \(w = 1\) be an equation in a class \(2\) nilpotent group. The system
    of equations in the ring of integers obtained by equating the exponents
    in the Mal'cev normal form for the equation to zero (that is, the system
    obtained in Lemma \ref{system_over_Z_technical_lem}) is called the
    \textit{\(\mathbb{Z}\)-system} of \(w = 1\).
  \end{defi}

  We now restate Lemma \ref{system_over_Z_technical_lem} up to grouping
  constants, and renaming constants and variables.

  \begin{lem}
    \label{Z_system_lem}
    The \(\mathbb{Z}\)-system of a single equation \(w = 1\) in a class \(2\)
    nilpotent group \(G\) with a virtually cyclic commutator subgroup is
    equivalent to a finite system of linear equations and congruences in
    \(\mathbb{Z}\), together with the following equations and congruences for
    finitely many \(k\):
    \begin{align}
      \label{quad_eqn}
      & \sum_{i = 1}^n - \alpha_{i} Y_{i} + f(X_1, \ \ldots, \ X_m)
      + \sum_{i = 1}^m \epsilon_{i} \left \lfloor \frac{\beta_{i} X_i +
      \kappa_{i}}{\gamma_{i}} \right \rfloor = 0, \\
      \label{quad_cong_with_quad_eqn}
      & g_k(X_1, \ \ldots, \ X_m)
      + \sum_{i = 1}^m \zeta_{ki} \left \lfloor \frac{\mu_{ki} X_i +
      \chi_{ki}}{\lambda_{ki}} \right \rfloor \equiv 0 \mod \delta_k,
    \end{align}
    where the values with Greek alphabet names are all constants computable from
    the class \(2\) nilpotent group and the single equation, \(X_1, \ \ldots, \
    X_m, \ Y_{1}, \ \ldots, \ Y_{n}\) are variables, and the
    \(f\) and the \(g_k\)s are quadratic functions.
  \end{lem}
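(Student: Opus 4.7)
The plan is to apply Lemma \ref{system_over_Z_technical_lem} directly and then sort and rename the resulting equations according to their structure. That lemma produces one equation of type (\ref{a_m_eqn}) for each $m = 1, \ldots, n$, one congruence (\ref{b_m_eqn}) mod $l_m$ for each $m = 1, \ldots, r$, and one condition of type (\ref{d_m_eqn}) for each $m = 0, 1, \ldots, t$, where the convention $d_0 = c$ and ``$\equiv_{k_0}$ is equality'' is in force. The $n$ equations (\ref{a_m_eqn}) are already linear in the variables $\boldsymbol{\nu}_{z, a_m}$, and the $r$ conditions (\ref{b_m_eqn}) are already linear congruences modulo $l_m$; together these account for the ``finite system of linear equations and congruences'' asserted in the statement, and contribute nothing to (\ref{quad_eqn}) or (\ref{quad_cong_with_quad_eqn}).

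For $m \geq 1$ the condition (\ref{d_m_eqn}) is a congruence modulo $k_m$, while for $m = 0$ it is a genuine equation over $\mathbb{Z}$. Inspecting (\ref{d_m_eqn}) in each case, one sees that the variables $\boldsymbol{\nu}_{z, a_i}$ and $\boldsymbol{\nu}_{z, b_i}$ occur both in degree-two monomials (weighted by the commutator constants $\alpha_{ijm}, \beta_{ijm}, \gamma_{ijm}$) and in linear monomials, while $\boldsymbol{\nu}_{z, d_m}$ and $\boldsymbol{\nu}_{z, c}$ appear only linearly, because $c$ and the $d_j$'s are central and never arise from commutator reductions. Floor terms $\bigl\lfloor (\omega_{z, b_i} + \epsilon_z \boldsymbol{\nu}_{z, b_i})/l_i \bigr\rfloor$ weighted by $\eta_{im}$ enter from reducing each $b_i$ exponent modulo $l_i$. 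For a fixed $m$ I collect all the quadratic and the non-floor linear monomials into a single polynomial in the relevant variables; this is manifestly a quadratic function over $\mathbb{Z}$ with constant term computable from $G$ and the $\omega_z$'s.

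For $m = 0$, separating out the $\boldsymbol{\nu}_{z, c}$ variables as the $Y_i$'s (with their coefficients becoming the $-\alpha_i$'s) and relabelling the remaining variables as $X_1, \ldots, X_m$ puts the resulting equation into exactly the shape (\ref{quad_eqn}), with the collected quadratic polynomial playing the role of $f$ and the $\eta_{i0}$-weighted floor contributions forming the final sum. For each of the $t$ indices $m \geq 1$ the same reorganisation yields a congruence of the form (\ref{quad_cong_with_quad_eqn}) modulo $\delta_k := k_m$, with the linearly occurring $\boldsymbol{\nu}_{z, d_m}$ absorbed into $g_k$. The only nontrivial work is bookkeeping: verifying that every monomial in (\ref{d_m_eqn}) lands in exactly one of the allowed slots (linear in some $Y$, quadratic or linear in the $X$'s, or a floor term), and that the Greek-letter coefficients produced are integers computable from $G$ and the given equation. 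I expect this to be the main source of tedium rather than a genuine obstacle, since every ingredient is already exhibited explicitly in Lemma \ref{system_over_Z_technical_lem}.
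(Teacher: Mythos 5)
Your proposal is correct and matches the paper's (implicit) argument: the paper offers no separate proof of this lemma, presenting it simply as a restatement of Lemma \ref{system_over_Z_technical_lem} ``up to grouping constants, and renaming constants and variables,'' which is exactly the sorting-and-relabelling you carry out. Your observation that the $\boldsymbol{\nu}_{z,c}$ and $\boldsymbol{\nu}_{z,d_m}$ variables occur only linearly (so the $c$-exponent condition becomes the single quadratic equation \eqref{quad_eqn} and the $d_m$-exponent conditions become the quadratic congruences \eqref{quad_cong_with_quad_eqn}) is precisely the bookkeeping the paper leaves to the reader.
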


  Lemma \ref{Z_system_decidable_lem} follows with a little work from the result
  of Siegel that the satisfiability of single quadratic equations in the ring of
  integers is decidable \cite{Siegel}. We refer the reader to
  \cite{duchin_liang_shapiro} for the proof.

  \begin{lem}[\cite{duchin_liang_shapiro}, Section 2.2]
    \label{Z_system_decidable_lem}
    It is decidable whether a system of equations of the form stated in
    Lemma \ref{Z_system_lem} admits a solution.
  \end{lem}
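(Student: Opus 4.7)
The plan is to reduce the given system, branch by branch, to finitely many single quadratic equations over \(\mathbb{Z}\) with no floor terms and no congruences, at which point Siegel's theorem \cite{Siegel} decides each instance. The reduction proceeds in three stages: first eliminate the floor terms, then solve the linear part, then dispose of the quadratic congruences. Each stage introduces only finitely many case-splits, and the original system is satisfiable iff at least one leaf of the resulting finite tree is.

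First I would eliminate every floor term \(\lfloor (\beta_i X_i + \kappa_i)/\gamma_i \rfloor\) occurring in \eqref{quad_eqn} or \eqref{quad_cong_with_quad_eqn}. For each such term, introduce a fresh integer variable \(Z_i\) and split into \(\gamma_i\) branches indexed by a residue \(R_i \in \{0, \ldots, \gamma_i - 1\}\); in branch \(R_i\), adjoin the linear equation \(\beta_i X_i + \kappa_i = \gamma_i Z_i + R_i\) (which pins down \(Z_i\) linearly from \(X_i\)) and substitute \(Z_i\) for the floor. Since \(Z_i\) enters linearly and there are only finitely many floor terms, the number of branches is finite, and within a branch the surviving quadratic equation and quadratic congruences have degree unchanged.

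Within each branch, I would then solve the combined linear subsystem (original linear equations and congruences, plus those introduced during floor elimination) using the Smith normal form: either the linear subsystem is inconsistent, in which case the branch is discarded, or its solution set is an affine sublattice parametrised as \(X = CT + d\) for new integer parameters \(T_1, \ldots, T_k\). Substituting this parametrisation into the remaining single quadratic equation and into each quadratic congruence produces a system of the same shape in the \(T_j\), but with no linear constraints left.

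Finally, to eliminate the quadratic congruences, let \(M\) be the least common multiple of all their moduli \(\delta_k\). For each tuple \((r_1, \ldots, r_k) \in (\mathbb{Z}/M\mathbb{Z})^k\) I check whether the congruences are all satisfied when \(T_j \equiv r_j \pmod{M}\); for each tuple that passes, I substitute \(T_j = M S_j + r_j\) into the single quadratic equation to obtain a single quadratic equation in \(S_1, \ldots, S_k\) over \(\mathbb{Z}\) with no congruences or floors, whose satisfiability is decidable by Siegel's theorem. The main obstacle is bookkeeping rather than any deep step: one must track the finitely many case-splits carefully and, crucially, verify that each substitution preserves rather than raises the degree of the single quadratic equation, so that we stay in the regime Siegel's theorem covers and do not drift into the undecidable higher-degree Diophantine world.
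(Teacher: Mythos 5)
The paper does not actually prove this lemma itself --- it defers to \cite{duchin_liang_shapiro}, Section 2.2 --- and your argument is essentially the reduction carried out there: branch on residues to eliminate the floor terms, parametrise the linear part, enumerate residues modulo the least common multiple of the moduli to discharge the congruences, and hand the surviving single quadratic equation to Siegel. The one structural point your write-up leans on implicitly, and which is worth making explicit, is that the system of \cref{Z_system_lem} contains only \emph{one} genuine quadratic \emph{equation} \eqref{quad_eqn} (it arises from the exponent of the single infinite-order central generator \(c\), with the \(Y_i\) occurring linearly and nowhere else in the system); if several quadratic equations sharing variables survived to the final stage, Siegel's theorem would not apply and the problem would in general be undecidable, so you must check --- as your affine substitutions \(X = CT + d\) and \(T_j = MS_j + r_j\) indeed do --- that each step preserves both the degree bound and this single-equation property. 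The only residual bookkeeping is the floor-division convention when some \(\gamma_i < 0\), which does not arise here since the \(\gamma_i\) are the positive indices \(l_i\).
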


\section{Equations in virtually nilpotent groups}
  \label{finite_ext_sec}
  Within this section, we look at how equations behave when passing to a finite
  index overgroup. From \cite{duchin_liang_shapiro}, we know that the single
  equation problem is decidable in any class \(2\) nilpotent group with a
  virtually cyclic commutator subgroup. Doing so requires an understanding of
  how automorphisms of such groups behave. We start by investigating these.

  %

  \begin{prop}
    \label{automorphisms_prop}
    Let \(G\) be a class \(2\) nilpotent group with a virtually cyclic
    commutator subgroup. Let \(\{a_1, \ \ldots, \ a_n, \ b_1, \ \ldots, \ b_r, \
    c, \ d_1, \ \ldots, \ d_t\}\) be a Mal'cev generating set for \(G\). For
    each \(\theta \in \Aut(G)\), there exist linear functions \(f_1, \ \ldots, \
    f_{n + r} \colon \mathbb{Z}^{n + r} \to \mathbb{Z}\), linear functions
    \(g_0, \ g_1, \ \ldots, \ g_t \colon \mathbb{Z}^{t + 1} \to \mathbb{Z}\),
    and quadratic functions \(h_0, \ h_1, \ \ldots, \ h_t \colon \mathbb{Z}^{n +
    r} \to \mathbb{Z}\), such that for all Mal'cev normal form words \(w =
    a_1^{i_1} \cdots a_n^{i_n} b_1^{i_{n + 1}} \cdots b_r^{i_{n + r}} c^{q_0}
    d_1^{q_1} \cdots d_t^{q_t}\),
    \[
      \theta(w) =
      a_1^{f_1(i_1, \ \ldots, \ i_{n + r})} \cdots b_r^{f_{n + r}(i_1, \ \ldots,
      \ i_{n + r})} c^{g_0(q_0, \ \ldots, \ q_t) + h_0(i_1, \ \ldots, \ i_{n +
      r})}
      \cdots d_t^{g_t(q_0, \ \ldots, \ q_t) + h_t(i_1, \ \ldots, \ i_{n +
      r})}.
    \]
  \end{prop}

  \begin{proof}
    First note that the commutator subgroup is preserved by \(\theta\). Thus (taking
    \(d_0 = c\) by convention), we have for all \(\rho \in \{1, \ \ldots, \
    n\}\), \(\sigma \in \{1, \ \ldots, \ r\}\) and \(\varsigma \in \{0, \
    \ldots, \ t\}\),
    \begin{align*}
      \theta(a_\rho) & = a_1^{A_{\rho 1}}
      \cdots a_n^{A_{\rho n}} b_1^{B_{\rho 1}} \cdots b_r^{B_{\rho n}}
      d_0^{D_{\rho 0}} d_1^{D_{\rho 1}} \cdots d_t^{D_{\rho t}} \\
      \theta(b_\sigma) & = a_1^{A_{(n + \sigma)1}}
      \cdots a_n^{A_{(n + \sigma)n}} b_1^{B_{(n + \sigma)1}} \cdots b_r^{B_{(n + \sigma)r}}
      d_0^{D_{(n + \sigma)0}} d_1^{D_{(n + \sigma)1}} \cdots d_t^{D_{(n +
      \sigma)t}} \\ \theta(d_\varsigma) & = d_0^{D'_{\varsigma 0}}
      d_1^{D'_{\varsigma 1}} \cdots d_t^{D'_{\varsigma t}}.
    \end{align*}
    for some \(A_{11}, \ \ldots, \ A_{(n + r)n}, \ B_{11}, \ \ldots, \ B_{(n +
    r)r}, \ D_{10}, \ \ldots, \ D_{(n + r)t}, \ D'_{00}, \ \ldots, \ D'_{tt} \in
    \mathbb{Z}\). Let \(g = \theta(a_1^{i_1} \cdots a_n^{i_n} b_1^{i_{n + 1}}
    \cdots b_r^{i_{n + r}} d_0^{q_0} d_1^{q_1} \cdots d_t^{q_t})\). Then
    \begin{align*}
      g & =
      \prod_{\rho = 1}^{n + r} a_1^{i_\rho A_{\rho 1}}
      \cdots a_n^{i_\rho A_{\rho n}} b_1^{i_\rho B_{\rho 1}} \cdots b_r^{i_\rho B_{\rho n}}
      i_\rho d_0^{D_{\rho 0}} \cdots i_\rho d_m^{D_{\rho m}}
      \prod_{\varsigma = 0}^t d_0^{q_\varsigma D'_{\varsigma 0}} \cdots
      d_t^{q_\varsigma D'_{\varsigma t}} \\
      & = a_1^{\displaystyle \sum_{\rho = 1}^{n + r} i_\rho A_{\rho 1}}
      \cdots b_r^{\displaystyle \sum_{\rho = 1}^{n + r} i_\rho B_{\rho r}} \\
      & \ \phantom{=} \ \prod_{\varsigma = 0}^t
      d_\varsigma^{\displaystyle \sum_{\rho = 1}^{n + r} i_\rho D_{\rho \varsigma}
      + \sum_{m = 0}^t  q_m D'_{m \varsigma}
      + \mathop{\sum_{\rho, \rho' = 1}^n}_{\rho < \rho'} \mathop{\sum_{\sigma, \sigma' = 1}^{n + r}}_{\sigma < \sigma'} \pi_{\rho \rho' \varsigma} A_{\sigma \rho} A_{\sigma' \rho'} i_\sigma i_{\sigma'}
      + \sum_{\rho = 1}^n \sum_{\rho' = 1}^r \mathop{\sum_{\sigma, \sigma' = 1}^{n + r}}_{\sigma < \sigma'} \upsilon_{\rho
      \rho' \varsigma} A_{\sigma \rho} B_{\sigma' \rho'} i_\sigma i_{\sigma'} }
      \\
      & \ \phantom{= \prod_{\varsigma = 0}^t d_\varsigma}^{
      \displaystyle + \mathop{\sum_{\rho,
      \rho' = 1}^r}_{\rho < \rho'} \mathop{\sum_{\sigma, \sigma' = 1}^{n + r}}_{\sigma < \sigma'} \tau_{\rho \rho'
      \varsigma} B_{\sigma \rho} B_{\sigma' \rho'} i_\sigma i_{\sigma'}}
    \end{align*}
    If \(\sigma \in \{1, \ \ldots, \ n\}\), define \(f_\sigma(i_1, \ \ldots, \
    q_t) = \sum_{\rho = 1}^{n + r} i_\rho A_{\rho \sigma}\), and if \(\sigma \in
    \{n + 1, \ \ldots, \ n + r\}\), let \(f_\sigma(i_1, \ \ldots, \ q_t) = \sum_{\rho =
    1}^{n + r} i_\rho B_{\rho \sigma}\). For \(\varsigma \in \{0, \ \ldots, \
    t\}\), define functions \(g_\varsigma\) and \(h_\varsigma\) by
    \begin{align*}
      g_\varsigma(q_0, \ \ldots, \ q_t) & = \sum_{m = 0}^t  q_m D'_{m \varsigma} \\
      h_\varsigma(i_1, \ \ldots, \ i_{n + r}) & =
      \sum_{\rho = 1}^{n + r} i_\rho D_{\rho \varsigma}
      + \mathop{\sum_{\rho, \rho' = 1}^n}_{\rho < \rho'}
      \mathop{\sum_{\sigma, \sigma' = 1}^{n + r}}_{\sigma < \sigma'} \pi_{\rho \rho' \varsigma} A_{\sigma \rho} A_{\sigma' \rho'} i_\sigma i_{\sigma'}
      + \sum_{\rho = 1}^n \sum_{\rho' = 1}^r \mathop{\sum_{\sigma, \sigma' = 1}^{n + r}}_{\sigma < \sigma'} \upsilon_{\rho
      \rho' \varsigma} A_{\sigma \rho} B_{\sigma' \rho'} i_\sigma i_{\sigma'}\\
      & \ \phantom{=} \
      + \mathop{\sum_{\rho, \rho' = 1}^r}_{\rho < \rho'}
       \mathop{\sum_{\sigma, \sigma' = 1}^{n + r}}_{\sigma < \sigma'} \tau_{\rho \rho'
      \varsigma} B_{\sigma \rho} B_{\sigma' \rho'} i_\sigma i_{\sigma'}.
    \end{align*}
    We have that \(\theta(g)\) equals
    \[
      a_1^{f_1(i_1, \ \ldots, \ i_{n + r})} \cdots b_r^{f_{n + r}(i_1, \ \ldots,
      \ i_{n + r})} d_0^{g_0(q_0, \ \ldots, \ q_t) + h_0(i_1, \ \ldots, \ i_{n +
      r})} \cdots d_t^{g_t(q_0, \ \ldots, \ q_t) + h_t(i_1, \ \ldots, \ i_{n +
      r})}.
    \]
    Moreover, the functions \(f_\sigma\) and \(g_\varsigma\) are linear, and
    the functions \(h_\varsigma\) are quadratic, as required.
  \end{proof}

  We generalise an equation in a group \(G\) to allow variables to be acted upon
  by automorphisms of \(G\). As we will see in
  Lemma \ref{finite_extension_twisted_lem}, solving twisted equations in \(G\) is
  `equivalent' to solving equations in finite extensions of \(G\).

  \begin{defi}
    Let \(G\) be a group. A \textit{twisted equation}
    in \(G\) with \textit{variables} \(V\) is an element
    \(w \in (G \cup F(V) \times \Aut(G))^\ast\),
    and is again denoted \(w = 1\). Define the function
    \begin{align*}
      p \colon G \times \Aut(G) & \to G \\
      (g, \ \psi) & \mapsto g \psi.
    \end{align*}
    If \(\phi \colon F(V) \to G\) is a homomorphism, let \(\bar{\phi}\) denote
    the (monoid) homomorphism from \((G \cup F(V) \times \Aut(G))^\ast\) to \((G
    \times \Aut(G))^\ast\), defined by \((h, \ \psi) \bar{\phi} = (h \phi, \
    \psi)\) for \((h, \ \psi) \in F(V) \times \Aut(G)\) and \(g \bar{\phi} = g\)
    for all \(g \in G\). A \textit{solution} to \(w = 1\) is a homomorphism
    \(\phi \colon F(V) \to G\), such that \(w \bar{\phi} p = 1_G\).

    For the purposes of decidability, in finitely generated groups, the elements
    of \(G\) will be represented as words over a finite generating set, and in
    twisted equations, automorphisms will be represented by their action on the
    generators.

    The \textit{single twisted equation problem} in \(G\) is the decidability
    question as to whether there is a terminating algorithm that accepts a
    twisted equation \(w = 1\) as input, returns \textsc{yes} if \(w =
    1\) admits a solution and \textsc{no} otherwise, where elements of \(G\)
    within \(w\) are represented by words over a finite generating set, and
    automorphisms are represented by their action on the finite generating set.
  \end{defi}

  We give a brief example of a twisted equation in \(\mathbb{Z}\).

  \begin{exa}
    Consider the twisted equation \(X = Y \psi\) in the group \(\mathbb{Z}\)
    with the generator \(a\), where \(\psi \in \Aut(\mathbb{Z})\) maps \(a\) to
    \(a^{-1}\) (the unique non-identity automorphism). It follows that this equation
    is equivalent to \(X = Y^{-1}\), which is not difficult to show has the
    solution set
    \[
      \{(a^x, \ a^{-x}) \mid x \in \mathbb{Z}\}.
    \]
    More generally, any twisted equation in \(\mathbb{Z}\) can be solved using
    this argument, as the identity automorphism can simply be removed without
    affecting the solution set, and the non-identity automorphism can be
    replaced by adding an inverse sign to the variable it acts on. This yields
    an (untwisted) equation in \(\mathbb{Z}\).
  \end{exa}

  The following lemma is widely known, although often not stated explicitly.
  Variations of it have been used to show systems of equations in virtually free
  groups, or virtually abelian groups are decidable, or to describe the
  structure of solution sets (see for example \cites{dahmani_guirardel,
  VF_eqns_EDT0L, VAEP}). We include a proof for completeness.

  \begin{lem}
    \label{finite_extension_twisted_lem}
    Let \(G\) be a group with a finite-index normal subgroup \(H\), such that
    \(H\) has decidable single twisted equation problem. Then \(G\) has
    decidable single equation problem.
  \end{lem}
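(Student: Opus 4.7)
The plan is to reduce a single (untwisted) equation $w = 1$ in $G$ to finitely many single twisted equations in $H$, one for each way of assigning the variables of $w$ to cosets of $H$, and then invoke the hypothesised oracle on each. Fix a transversal $\{t_1, \ldots, t_k\}$ of $H$ in $G$ with $t_1 = 1$, so every element of $G$ factors uniquely as $h t_i$ with $h \in H$. Suppose the variables appearing in $w$ are $X_1, \ldots, X_m$. Any solution $\phi$ of $w = 1$ determines, for each $j$, an index $i(j) \in \{1, \ldots, k\}$ with $\phi(X_j) \in H t_{i(j)}$; there are only $k^m$ such assignments, so it suffices to give, for each fixed tuple $(i(1), \ldots, i(m))$, a uniform reduction to a twisted equation in $H$.

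For a fixed guess, introduce new variables $Y_1, \ldots, Y_m$ ranging over $H$ and perform the substitution $X_j \mapsto Y_j t_{i(j)}$ (and $X_j^{-1} \mapsto t_{i(j)}^{-1} Y_j^{-1}$) in $w$. Since $H$ is normal in $G$, conjugation by any $t_i^{\pm 1}$ restricts to an automorphism $\psi_{i}^{\pm}$ of $H$, and rewriting each constant $g \in G$ appearing in $w$ as $h_g t_{s(g)}$ for $h_g \in H$ lets us sweep all coset representatives to the right of the word: every time a $t_i^{\pm 1}$ must cross an $H$-element (either a constant $h$ or a variable $Y_j$), we replace $t_i^{\pm 1} h$ by $\psi_i^{\pm}(h) t_i^{\pm 1}$, and similarly for $Y_j$ except that we record the application of $\psi_i^{\pm}$ as part of a twisted equation. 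After this reshuffle the word takes the form $u \cdot T$, where $u$ is a word over $H$ and the $Y_j$'s (each $Y_j$ carrying an automorphism of $H$ that is a composition of the $\psi_i^{\pm}$'s and is computable from the guess), and $T$ is a fixed product of transversal elements determined entirely by the guess and the constants in $w$.

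Reading the resulting identity $u \cdot T = 1$ modulo $H$, we see that the guess can yield a solution only if $T \in H$; this can be checked by computing the coset of $T$, for example using the (decidable) word problem in $G/H$ given by finite multiplication tables on the transversal. If $T \notin H$, discard the guess. If $T \in H$, absorb $T$ into the final $H$-constant to obtain a genuine twisted equation $u' = 1$ in $H$, which by hypothesis can be decided; by construction a solution of $u' = 1$ in $H$ pulls back (via $X_j = Y_j t_{i(j)}$) to a solution of the original equation that respects the guessed cosets, and conversely. The overall algorithm returns \textsc{yes} iff at least one of the finitely many guessed twisted equations is satisfiable.

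The conceptual work all sits in the rearrangement step: one must verify carefully that pushing the transversal symbols past constants and variables genuinely produces a twisted equation in $H$ with automorphisms drawn from $\Aut(H)$, and that the bookkeeping is bijective between solution sets (fixing the chosen coset profile). Everything else is finite enumeration and a single call to the decidability hypothesis, so once the normal-form manipulation is checked the lemma follows.
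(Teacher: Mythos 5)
Your proposal is correct and follows essentially the same route as the paper: both decompose each variable as an $H$-part times a transversal part, use normality of $H$ to sweep the transversal symbols to one side (accumulating conjugation automorphisms on the $H$-variables, which yields the twisted equation), enumerate the finitely many coset assignments whose transversal product lands in $H$, and call the twisted-equation oracle on each. The only cosmetic difference is that you guess the coset profile up front, whereas the paper keeps the transversal parts as variables and then restricts to the finite set of admissible tuples.
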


  \begin{proof}
    Let \(T\) be a (finite) transversal for \(H\). consider an equation \(w
    = 1\) in \(G\). We can express every element in \(G\) in the form \(ht\) for
    \(h \in H\) and \(t \in T\). Thus we can write \(w = 1\) as
    \begin{align}
      \label{finite_ext_eqn}
      h_1 t_1 X_{i_1}^{\epsilon_1} \cdots h_K t_K X_{i_K}^{\epsilon_K} = 1,
    \end{align}
    where \(h_j \in H\), \(t_j \in T\), and \(\epsilon_j \in \{-1, \ 1\}\), for
    all \(j\), and \(X_1, \ \ldots, \ X_N\) are the variables of \( = 1\).
    If \((g_1, \ \ldots, \ g_N)\) is a solution, then each \(g_j\) can be
    expressed in the normal subgroup-transversal form, and so by applying this
    fact to our variables, (\ref{finite_ext_eqn}) admits a solution if and only
    if the following equation does:
    \begin{align}
      \label{finite_ext_subgp_trans_eqn}
      h_1 t_1 (Y_{i_1} Z_{i_1})^{\epsilon_1} \cdots h_K t_K (Y_K
      Z_K)^{\epsilon_K} = 1,
    \end{align}
    where \(X_j = Y_j Z_j\), \(Y_j\) is a variable over \(H\), and \(Z_j\) is a
    variable over \(T\), for all \(j\). For each \(g \in G\), define \(\psi_g
    \colon G \to G\) by \(h \psi_g =  g h g^{-1}\). As \(H\) is normal, these
    automorphisms fix \(H\). We will abuse notation, and extend this notation to
    define \(\psi_{Z_1}, \ \ldots, \ \psi_{Z_N}\). Let
    \begin{align*}
      \delta_j = \left\{
      \begin{array}{cl}
        0 & \epsilon_j = 1 \\
        1 & \epsilon_j = -1.
      \end{array}
      \right.
    \end{align*}
    Thus (\ref{finite_ext_subgp_trans_eqn}) is equivalent to
    \begin{align*}
      (Y_{i_1}^{\epsilon_1} \psi_{Z_{i_1}}^{\delta_1}) Z_{i_1}^{\epsilon_1}
      h_1 t_1 \cdots
      (Y_{i_K}^{\epsilon_K} \psi_{Z_{i_K}}^{\delta_K}) Z_{i_K}^{\epsilon_K}
      h_K t_K = 1.
    \end{align*}
    By pushing all \(Y_j\)s and \(h_j\)s to the left, we obtain
    \begin{align}
      \label{finite_ext_subgp_to_left_eqn}
      (Y_{i_1}^{\epsilon_1} \psi_{Z_{i_1}}^{\delta_1})
      (h_1 \psi_{Z_{i_1}}^{\epsilon_1}) \cdots
      (Y_{i_K}^{\epsilon_K} \psi_{Z_{i_K}}^{\delta_K} \psi_{t_{K - 1}}
      \psi_{Z_{i_{K - 1}}}^{\epsilon_{K - 1}} \cdots \psi_{t_1}
      \psi_{Z_{i_1}}^{\epsilon_1})
      (h_K \psi_{Z_{i_K}}^{\epsilon_K} \psi_{t_{K - 1}} \cdots \psi_{t_1}
      \psi_{Z_{i_1}}^{\epsilon_1})
      Z_{i_1}^{\epsilon_1} t_1 \cdots Z_{i_K}^{\epsilon_K} t_K = 1.
    \end{align}
    We have that a necessary condition for a potential solution \((y_1 z_1, \
    \ldots, \ y_N z_N)\) to (\ref{finite_ext_subgp_to_left_eqn}) to be a
    solution is that \(t_1 z_{i_1} \cdots t_K z_{i_K} \in H\). Let \(A\) be the
    set of tuples \((z_1, \ \ldots, \ z_N)\) of transversal elements such that
    \(t_1 z_{i_1} \cdots t_K z_{i_K} \in H\). As \(T\) is finite, so is \(A\),
    and so the solution set to (\ref{finite_ext_subgp_to_left_eqn}) is equal to
    the finite union across \((z_1, \ \ldots, \ z_N) \in A\) of the following
    twisted equations in \(H\):
    \begin{align*}
      (Y_{i_1}^{\epsilon_1} \psi_{z_{i_1}}^{\delta_1})
      (h_1 \psi_{z_{i_1}}^{\epsilon_1}) \cdots
      (Y_{i_K}^{\epsilon_K} \psi_{z_{i_K}}^{\delta_K} \psi_{t_{K - 1}}
      \psi_{z_{i_{K - 1}}}^{\epsilon_{K - 1}} \cdots \psi_{t_1}
      \psi_{z_{i_1}}^{\epsilon_1})
      (h_K \psi_{z_{i_K}}^{\epsilon_K} \psi_{t_{K - 1}} \cdots \psi_{t_1}
      \psi_{z_{i_1}}^{\epsilon_1})
      z_{i_1}^{\epsilon_1} t_1 \cdots z_{i_K}^{\epsilon_K} t_K = 1
    \end{align*}
    Since the twisted single equation problem in \(H\) is decidable, and we can
    check if each of these equations admit solutions, noting there are finitely
    many of them. If at least one admits a solution, then \(w = 1\) does.
    If none admit a solution, then neither does \(w = 1\).
  \end{proof}

  Now that we have Lemma \ref{finite_extension_twisted_lem}, the following is
  (almost) all that is required to prove that the single equation problem in a
  virtually class \(2\) nilpotent group with a virtually cyclic commutator
  subgroup is decidable.

  \begin{lem}
    \label{class_2_twisted_lem}
    The single twisted equation problem in a class \(2\) nilpotent group
    with a virtually cyclic commutator subgroup is decidable.
  \end{lem}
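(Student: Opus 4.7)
The plan is to reduce the single twisted equation problem to the ordinary $\mathbb{Z}$-system machinery already developed in Section \ref{transforming_over_Z_sec}, exploiting the fact that the action of an automorphism on a Mal'cev normal form word, as described in \cref{automorphisms_prop}, has exactly the right polynomial degree structure to preserve the shape of the resulting system.

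First I would set up notation analogous to \cref{eqn_ntn}. Let $w = 1$ be a twisted equation in $G$ with variables $X_1, \ldots, X_N$, written in the form
\[
  \omega_1 (X_{i_1}^{\epsilon_1}, \psi_1) \, \omega_2 (X_{i_2}^{\epsilon_2}, \psi_2) \cdots \omega_K (X_{i_K}^{\epsilon_K}, \psi_K) = 1,
\]
with the $\omega_k$ in Mal'cev normal form. For each variable $X_j$ introduce integer unknowns $X_{j,a_1}, \ldots, X_{j,a_n}, X_{j,b_1}, \ldots, X_{j,b_r}, X_{j,c}, X_{j,d_1}, \ldots, X_{j,d_t}$ representing its Mal'cev exponents. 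For each occurrence $(X_{i_k}^{\epsilon_k}, \psi_k)$, apply \cref{automorphisms_prop} to the tentative value of $X_{i_k}^{\epsilon_k}$: the proposition produces a Mal'cev-form expression whose $a_i$ and $b_i$ exponents are linear functions of the variables $X_{i_k,\ast}$ (with coefficients determined by $\psi_k$), and whose $c$ and $d_i$ exponents are a linear function of the $X_{i_k,c}, X_{i_k,d_\ast}$ plus a quadratic function of the $X_{i_k,a_\ast}, X_{i_k,b_\ast}$. Denote the resulting Mal'cev-form expressions by $\boldsymbol{\nu}_k$, so that $w = 1$ becomes an ordinary (untwisted) equation $\omega_1 \boldsymbol{\nu}_1 \cdots \omega_K \boldsymbol{\nu}_K = 1$ whose ``potential solution'' slots $\boldsymbol{\nu}_k$ are Mal'cev-form expressions with polynomial exponents in the underlying integer unknowns.

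Next I would run the proof of \cref{system_over_Z_technical_lem} verbatim on this equation: push all $d_m$'s to the right, commute the $a_i$'s and $b_i$'s into the prescribed order while accumulating $d_m$-cost via the structure constants $\alpha_{ij\ast}, \beta_{ij\ast}, \gamma_{ij\ast}$, reduce the $b_i$ exponents modulo $l_i$ using floor terms weighted by the $\eta_{i\ast}$, and finally equate each resulting exponent to zero. The essential verification is that substituting the linear/quadratic outputs of \cref{automorphisms_prop} into the formulas of \cref{system_over_Z_technical_lem} does not escape the schema of \cref{Z_system_lem}: the $a_m$-equations and $b_m$-congruences, which depend linearly on the $\boldsymbol{\nu}_{k,a_\ast}, \boldsymbol{\nu}_{k,b_\ast}$, become linear equations and congruences in the $X_{j,\ast}$; the $d_m$-congruences and the single $c$-equation, which are quadratic in the $\boldsymbol{\nu}_{k,a_\ast}, \boldsymbol{\nu}_{k,b_\ast}$ plus linear in the $\boldsymbol{\nu}_{k,c}, \boldsymbol{\nu}_{k,d_\ast}$ plus floor terms in the $\boldsymbol{\nu}_{k,b_\ast}$, become quadratic expressions plus floor terms in the $X_{j,\ast}$, because linear substituted into linear is linear and linear substituted into quadratic is quadratic. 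The arguments of the floor terms $\lfloor (\omega_{k,b_i} + \epsilon_k \boldsymbol{\nu}_{k,b_i})/l_i \rfloor$ inherit linear dependence on the $X_{j,\ast}$ and therefore fit the form of the floor terms appearing in \eqref{quad_eqn} and \eqref{quad_cong_with_quad_eqn}. Having checked this, \cref{Z_system_decidable_lem} concludes decidability.

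The main obstacle is bookkeeping rather than any conceptual difficulty: one has to confirm carefully that substitution preserves the precise form demanded by \cref{Z_system_lem}, in particular that only one quadratic \emph{equation} (coming from the $c = d_0$ slot) is produced, while the contributions from the $d_1, \ldots, d_t$ slots remain quadratic \emph{congruences}, and that no cross-terms between distinct variables $X_j$ generated by the quadratic pieces of \cref{automorphisms_prop} inflate the degree past two when they are combined with the quadratic collection cost in the $d_m$-slot. Since the quadratic $h_\varsigma$ of \cref{automorphisms_prop} depends only on the $a,b$-exponents of a single variable, and the collection cost of \cref{system_over_Z_technical_lem} multiplies $a,b$-exponents of distinct variables, all cross-products remain of degree two in the $X_{j,\ast}$, so no degree blow-up occurs.
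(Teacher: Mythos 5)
Your proposal is correct and takes essentially the same route as the paper: substitute the action of each automorphism (as described by \cref{automorphisms_prop}) into the potential-solution slots of \cref{system_over_Z_technical_lem}, observe that the \(c\)- and \(d_i\)-exponents occur only linearly in that system so the linear-plus-quadratic automorphism action does not raise any degree past two, and conclude via \cref{Z_system_lem} and \cref{Z_system_decidable_lem}. Your write-up is in fact more explicit than the paper's about why the floor terms and cross-terms stay within the required schema.
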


  \begin{proof}
    Consider a single twisted equation \(\mathcal E\) in a class \(2\) nilpotent
    group \(G\). We can view \(\mathcal E\) by applying the automorphisms to the
    words \(\boldsymbol{\nu}_z\) within the statement of Lemma
    \ref{system_over_Z_technical_lem}. Using Proposition
    \ref{automorphisms_prop}, automorphisms act as linear functions of
    \(\boldsymbol{\nu}_{z, a_1}, \ \ldots, \ \boldsymbol{\nu}_{z, a_n}\) and
    \(\boldsymbol{\nu}_{z, b_1}, \ \ldots, \ \boldsymbol{\nu}_{z, \ b_r}\), and
    quadratic functions of \(\boldsymbol{\nu}_{z, \ c}\) and
    \(\boldsymbol{\nu}_{z, \ d_1}, \ \ldots, \ \boldsymbol{\nu}_{z, \ d_t}\).

    In Lemma \ref{system_over_Z_technical_lem}, the values
    \(\boldsymbol{\nu}_{z, \ c}\) and \(\boldsymbol{\nu}_{z, \ d_1}, \ \ldots, \
    \boldsymbol{\nu}_{z, \ d_m}\) only appear in linear terms in the system
    (that is, they never appear in the form \(\boldsymbol{\nu}_{z, c}
    \boldsymbol{\nu}_{z, \ d_i})\). Thus after applying the automorphisms, we
    will have a system of the form stated in Lemma \ref{Z_system_lem} equivalent
    to \(\mathcal E\). By Lemma \ref{Z_system_decidable_lem}, a system of the
    form of Lemma \ref{Z_system_lem} is decidable, and thus the result follows.
  \end{proof}

  All that remains to prove Theorem \ref{virt_class_2_decidable_thm} is to deal
  with the difference between a finite-index subgroup and a finite-index normal
  subgroup.

  \begin{lem}
    \label{finite_extension_to_virtually_lem}
    Let \(N\) be a finite-index normal subgroup of a group \(H\), such that
    \(H\) is nilpotent of class \(2\), and \(H\) has a virtually cyclic
    commutator subgroup. Then \(N\) is nilpotent of class \(2\) and has a
    virtually cyclic commutator subgroup.
  \end{lem}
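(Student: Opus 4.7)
The plan is to observe that both conclusions --- being class $2$ nilpotent and having a virtually cyclic commutator subgroup --- are closure properties under passage to subgroups, so in fact neither the finite-index nor the normality hypothesis on $G$ is actually needed. The argument then splits into two short verifications.

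First, from $G \leq H$ I would establish $\gamma_i(G) \leq \gamma_i(H)$ for all $i$ by a trivial induction, using $\gamma_{i+1}(G) = [G, \gamma_i(G)] \leq [H, \gamma_i(H)] = \gamma_{i+1}(H)$ in the inductive step. Applied at $i = 2$ with the hypothesis $\gamma_2(H) = \{1\}$, this yields $\gamma_2(G) = \{1\}$, so $G$ is class $2$ nilpotent.

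Second, I would note that $[G, G] \leq [H, H]$, since every commutator of elements of $G$ is already a commutator of elements of $H$, and then verify the elementary fact that any subgroup of a virtually cyclic group is virtually cyclic: if $V$ is virtually cyclic with a cyclic finite-index subgroup $C$, then for any $W \leq V$ the subgroup $W \cap C \leq C$ is cyclic, and $[W : W \cap C] \leq [V : C] < \infty$. Applying this with $V = [H, H]$ and $W = [G, G]$ finishes the proof.

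The main obstacle is essentially non-existent; the only mildly non-trivial input is the closure property of virtually cyclic groups under subgroups. The role of this lemma in the paper appears to be administrative: it enables the application of \cref{finite_extension_twisted_lem}, which requires a finite-index \emph{normal} subgroup, by permitting a passage to the normal core of a given finite-index class $2$ nilpotent subgroup of a virtually class $2$ nilpotent group --- that normal core, being a subgroup of the finite-index subgroup, inherits both properties via precisely this lemma.
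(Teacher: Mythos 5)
Your proof is correct and follows essentially the same route as the paper: $G$ is class $2$ nilpotent because subgroups of class $c$ nilpotent groups are class $c$ nilpotent, and $[G,G] \leq [H,H]$ is virtually cyclic because subgroups of virtually cyclic groups are virtually cyclic. You simply spell out the two closure properties that the paper cites or asserts, and your observation that neither finite index nor normality is needed is accurate.
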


  \begin{proof}
    Subgroups of nilpotent groups of class \(c\) are always nilpotent (see, for
    example \cite{theory_of_nilpotent_groups}, Theorem 2.4), and thus \(N\) is
    nilpotent of class \(2\). Moreover \([N, \ N] \leq [H, \ H]\), and so
    \([N, \ N]\) is contained in a virtually cyclic group, and is therefore
    virtually cyclic.
  \end{proof}

  Combining our lemmas now gives the following.

\mainthm*

  \begin{proof}
    Let \(\mathcal{P}\) be the property of being class \(2\) nilpotent and
    having a virtually cyclic commutator subgroup. By taking the normal core, we
    have that a virtually \(\mathcal{P}\) group admits a finite-index normal
    subgroup. Lemma \ref{finite_extension_twisted_lem} implies that this normal
    subgroup must be \(\mathcal{P}\). We have therefore shown that any virtually
    \(\mathcal{P}\) group has a finite-index normal subgroup that is
    \(\mathcal{P}\). We have from Lemma \ref{class_2_twisted_lem} that the
    single twisted equation problem in a group with \(\mathcal{P}\) is
    decidable. The result now follows by
    \ref{finite_extension_to_virtually_lem}.
  \end{proof}

\section*{Acknowledgements}

  The author would like to thank Laura Ciobanu for lots of helpful mathematical
  discussions and writing advice. The author would also like to thank Alex
  Evetts for explaining some results about virtually class \(2\) nilpotent
  groups, Alan Logan for writing advice, and Albert Garreta for help finding
  references and useful comments. I would like to thank the anonymous reviewers
  for very helpful comments. I would like to acknowledge the support provided by
  the London Mathematical Society, the Heilbronn Institute for Mathematical
  Research and the University of St Andrews during the writing of this paper.

\bibliography{references}

\end{document}